\documentclass[12pt, reqno]{amsart}
\usepackage[dvipdfmx]{graphicx}
\usepackage{ascmac}
\usepackage{color}
\usepackage{type1cm}
\usepackage{wrapfig}
\usepackage{comment}
\usepackage{ytableau}
\usepackage{braket}
\usepackage{latexsym}
\usepackage{mathtools}
\usepackage{amsmath}
\usepackage{amsthm}
\usepackage{mathrsfs}
\usepackage{lmodern}
\usepackage[T1]{fontenc}
\usepackage{textcomp}
\usepackage[utf8]{inputenc}
\usepackage{color}
\usepackage{amssymb}
\usepackage{enumerate}
\usepackage[abbrev]{amsrefs}
\usepackage{indentfirst}
\usepackage{bmpsize}
\usepackage{hyperref}
\usepackage{enumitem}
\usepackage{url}
\usepackage{autobreak}
\usepackage[justification=justified]{caption}
\usepackage{bm}
\usepackage{here}
\usepackage[labelformat=empty,subrefformat=parens]{subcaption}
\usepackage[pagewise]{lineno}
\usepackage{adjustbox}
\usepackage{longtable}
\usepackage{ltablex}
\usepackage{vtable,hhline}
\usepackage{subcaption}
\usepackage{caption}
\usepackage[subrefformat=parens]{subcaption}
\usepackage{graphbox} 
\usepackage{empheq}

\newcommand*\patchAmsMathEnvironmentForLineno[1]{
  \expandafter\let\csname old#1\expandafter\endcsname\csname #1\endcsname
  \expandafter\let\csname oldend#1\expandafter\endcsname\csname end#1\endcsname
  \renewenvironment{#1}
     {\linenomath\csname old#1\endcsname}
     {\csname oldend#1\endcsname\endlinenomath}}
\newcommand*\patchBothAmsMathEnvironmentsForLineno[1]{
  \patchAmsMathEnvironmentForLineno{#1}
  \patchAmsMathEnvironmentForLineno{#1*}}
\AtBeginDocument{
\patchBothAmsMathEnvironmentsForLineno{equation}
\patchBothAmsMathEnvironmentsForLineno{align}
\patchBothAmsMathEnvironmentsForLineno{flalign}
\patchBothAmsMathEnvironmentsForLineno{alignat}
\patchBothAmsMathEnvironmentsForLineno{gather}
\patchBothAmsMathEnvironmentsForLineno{multline}
}

\renewcommand{\thefootnote}{} 

\allowdisplaybreaks

\theoremstyle{plain} 
\newtheorem{theorem}{\sc Theorem}[section]
\newtheorem{lemma}[theorem]{\sc Lemma}

\newtheorem{proposition}[theorem]{\sc Proposition}
\newtheorem{claim}[theorem]{\sc Claim}

\theoremstyle{definition} 
\newtheorem{definition}[theorem]{\sc Definition}
\newtheorem{remark}[theorem]{\sc Remark}

\newtheorem*{AIuse}{\sc Declaration of AI use}
\makeatletter
  
  \@addtoreset{equation}{section}
\makeatother

\allowdisplaybreaks[3] 

\newcommand\cinput[2]{\lower#1pt\hbox{\input{#2}}}

\begin{document}

\title{On infinite knot families with the same HOMFLYPT polynomial}
\author{Yuki Matsushima}
\date{}
\renewcommand{\thefootnote}{\arabic{footnote}}  
\setcounter{footnote}{0} 

\maketitle

\begin{abstract}
In this paper, we focus on the examples constructed by Kanenobu \cite{K86_2, K25} and provide new family knots with the same HOMFLYPT polynomial invariants, which are distinguished by the Kauffman polynomial.
\end{abstract}


\section{Introduction} \label{Introduction}
The central problem in knot theory is determining whether two given knots are equivalent.
In studying this problem, polynomial invariants—starting with the Alexander polynomial and the Jones polynomial—have been developed as classical and effective tools.
Among them, the HOMFLYPT polynomial \cite{HOMFLY85, PT88} is a particularly powerful invariant.
However, many examples are known where distinct knots share the same HOMFLYPT polynomial.
In this paper, we focus on the examples constructed by Kanenobu \cite{K86_2, K25} and present their generalization.
By using these generalizations, we can construct new infinite families of knots sharing the same HOMFLYPT polynomials.
This construction may potentially offer clues toward building knots whose polynomials match those of specific knots, such as the unknot, as well as refining classical polynomial invariants.

The paper is organized as follows. 
In section \ref{Preliminaries}, we introduce some notation and recall previous results.
In section \ref{SameHOMFLYPTpoly}, we introduce the knot or link family, which shares the same HOMFLYPT polynomial (Proposition \ref{sameHOMFLYPTpoly}), and calculate its polynomial.
In section \ref{special example}, 
we present an explicit example of distinct knots satisfying Proposition \ref{sameHOMFLYPTpoly} (Theorem \ref{specialeq}) and provide its proof.

\begin{AIuse}
	AI tools were used to assist with English translation.
\end{AIuse}

\section{Preliminaries} \label{Preliminaries}
In this section, we introduce some notation and recall previous results.
First, let us recall the definition of the HOMFLYPT polynomial \cite{HOMFLY85, PT88}.
\begin{definition}
For an oriented link $L$ in $S^3$, the HOMFLYPT polynomial $P_{L}(v,z)$ of $L$ is uniquely defined by the following conditions;
\begin{enumerate}
\setlength{\leftskip}{1cm}
	\item[(1)] $P_{O}(v,z)=1$, where $O$ is an unknot.

	\item[(2)] If $L_{+},L_{-}$ and $L_{0}$ are the diagram depicted in Figure \ref{skein}, then
						\begin{align}
							v^{-1}P_{L_{+}}(v,z)-vP_{L_{-}}(v,z)=zP_{L_{0}}(v,z). \label{skeinrel}
						\end{align}
\vspace{-20pt}
	\begin{figure}[H]
		\centering
		\includegraphics[width=0.5\textwidth]{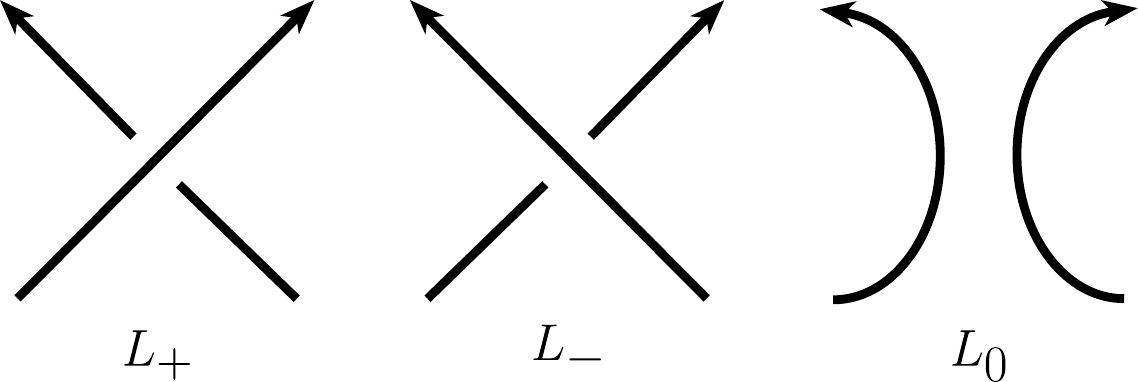}
		\caption{} \label{skein}
	\end{figure}
\end{enumerate}
\end{definition}

\begin{remark}
Let $\# L$ be the number of components of a link $L$.
Then, by \cite{LM87}*{Proposition 22}, there exists $P_{2i}(v;L) \in {\mathbb{Z}[v^{\pm1}]}$, where $i$ is a non-negative integer, such that
$$
\displaystyle{P_{L}(v,z)=\sum_{i \geq 0} P_{2i}(v;L)z^{2i+1-\#L}}.
$$
\end{remark}

There are several studies of infinite knot families with the same HOMFLYPT polynomial \citelist{\cite{K86} \cite{K89} \cite{K86_2} \cite{K25} \cite{K10} \cite{T20} \cite{W06}}.
As an example of such an infinite family of knots, we focus on the infinite family of knots given by Kanenobu \cite{K86_2, K25}.

\begin{definition}
For any $n \in \mathbb{Z} \cup \{ \infty \}$, let \includegraphics[height=1.8em, align=c]{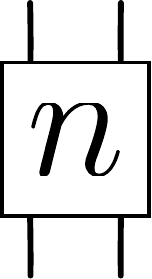} be the $n$-tangle depicted in Figure \ref{halftwist}.
	\begin{figure}[H]
		\centering
		\includegraphics[width=0.4\textwidth]{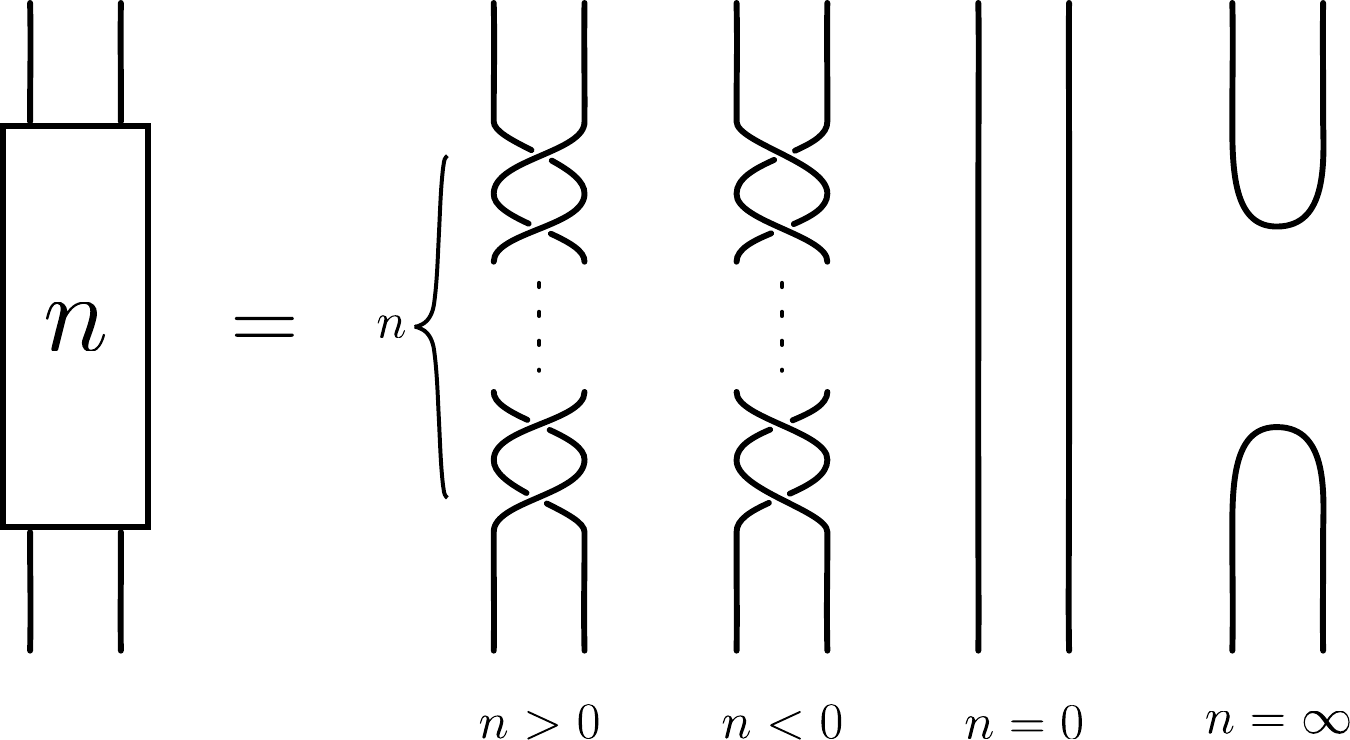}
		\caption{$n$-tangle} \label{halftwist}
	\end{figure}
\end{definition}

\begin{theorem}[\cite{K86_2}] \label{kanenobu86}
For any $p,q \in \mathbb{Z}$, let $A(p,q)$ be the knot represented by the diagram shown in Figure \ref{Kanenobuknot_86}.
	\begin{figure}[H]
		\centering
		\includegraphics[width=0.25\textwidth]{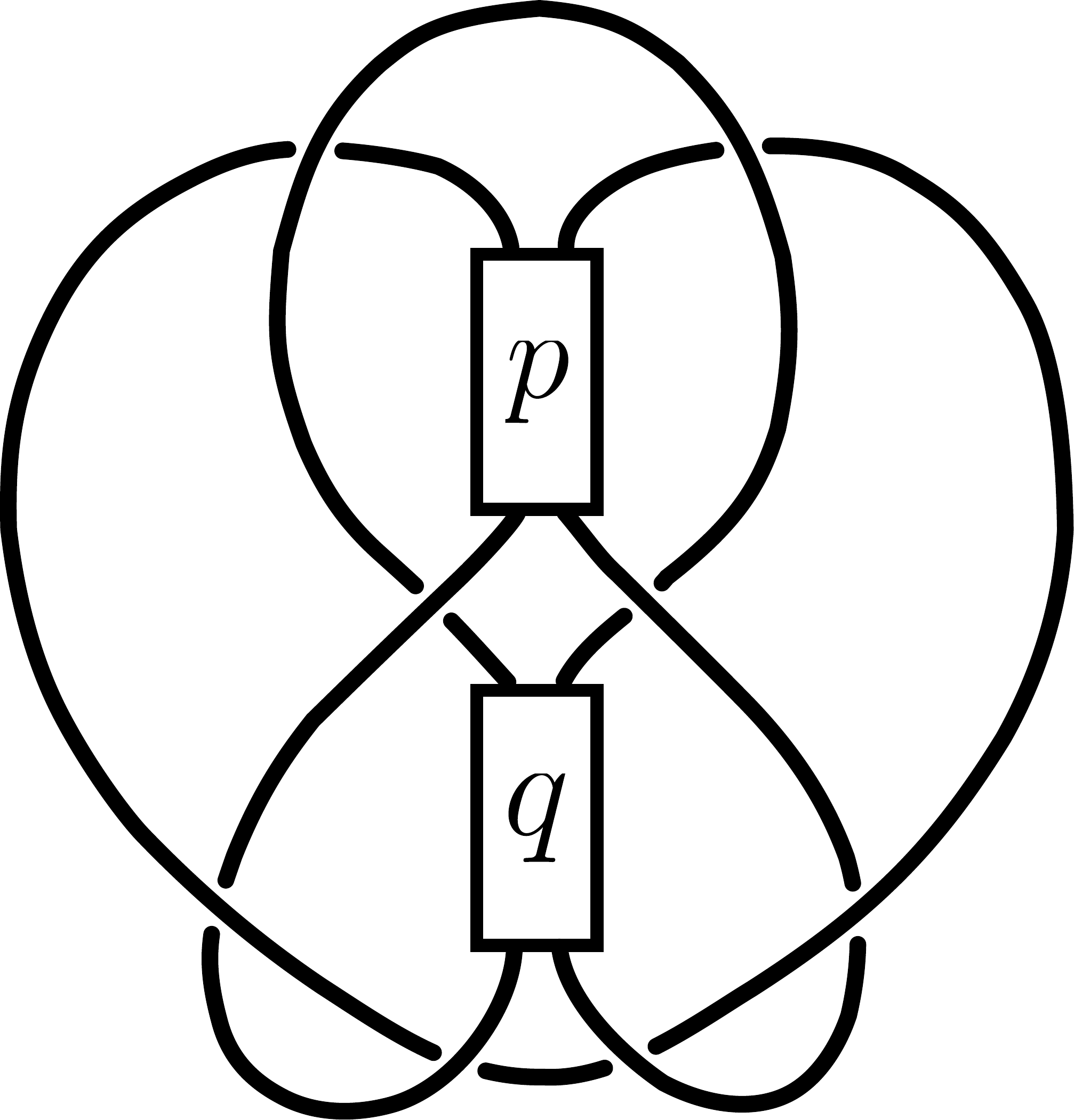}
		\caption{$A(p,q)$} \label{Kanenobuknot_86}
	\end{figure}
If $p+q = p'+q'$ and $(p, q) \equiv (p', q') \pmod{2}$, then $P_{A(p,q)}(v,z) = P_{A(p',q')}(v,z)$.
Furthermore, if $(p, q) \neq (p', q')$ and $(p, q) \neq (q', p')$, then $A(p, q) \not\cong A(p', q')$.
\end{theorem}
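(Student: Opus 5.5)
The plan is to recall Kanenobu's argument and organize it in two parts: equality of HOMFLYPT polynomials via the skein relation, and distinctness via a finer invariant.

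\emph{Equality of the polynomials.} The knot $A(p,q)$ is obtained from a fixed ribbon-type configuration (two clasped bands, giving the square of the figure-eight or a connected sum like $4_1\#4_1$ when $p=q=0$) by inserting a $p$-tangle and a $q$-tangle in two twist regions. Applying the skein relation \eqref{skeinrel} at a crossing of the $p$-twist region gives a linear recursion
$$P_{A(p+2,q)} = v^{2}P_{A(p,q)} + v z\, P_{L_0(p+1)},$$
where $L_0$ is independent of $p$ up to the smoothing. Iterating this expresses $P_{A(p,q)}$ as a combination of $P_{A(p_0,q_0)}$ and the polynomials of the smoothed links, with $p_0,q_0\in\{0,1\}$ fixed by parity. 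The key geometric input is that $A(p,q)$ and $A(p+2,q-2)$ are related by a symmetry: the two twist regions can be slid into one another, so that the smoothed links $L_0$ arising from the $p$-region and from the $q$-region are isotopic, typically to a split or connected sum of simpler links whose polynomial depends only on $p+q$. Substituting, one gets a closed form
$$P_{A(p,q)} = \alpha_{p+q}(v,z) + \beta(v,z)\,\bigl(\text{terms symmetric in the exponents } v^{p},v^{q}\bigr),$$
and one checks by direct computation, for instance by inducting on $|p-p'|$, that the dependence is only through $p+q$ and the parity class of $(p,q)$. I would first do the computation in the base case $(p,q)\to(p+2,q-2)$ and verify that the difference of the two skein expansions cancels.

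\emph{Distinctness.} The Kauffman polynomial, or more simply the Alexander module / Jones polynomial, will not suffice, since these are determined by $P$ at least in the Alexander and Jones specializations. I would therefore use an invariant sensitive to $p,q$ separately. Because $A(p,q)$ is a ribbon knot which is a sum-like object along two bands, the natural choice is the Alexander module structure over $\mathbb{Z}[t^{\pm1}]$ (not just its order) or the Kauffman polynomial $F$. The plan is to compute $F_{A(p,q)}$ by the same recursion: the Kauffman skein relation involves $L_\infty$, which for twist regions yields a three-term recursion whose solutions involve $p$ and $q$ not only through $p+q$, and to show that the resulting expression determines the unordered pair $\{p,q\}$. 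Concretely, I would extract a coefficient, for example the top $a$-degree or a specialization, that equals a symmetric function like $pq$ or $p^2+q^2$ plus a function of $p+q$. Since $p+q$ is already fixed, this recovers $\{p,q\}$.

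The hard step will be this distinctness computation. The closed form of $F$ is messy, and one must find a specialization, perhaps $F$ at a root of unity or the $Q$-polynomial, in which the dependence on $pq$ survives cleanly. As a fallback, I would compare the Alexander modules: if the module splits as $\Lambda/(f_p)\oplus\Lambda/(f_q)$ with $f_n$ depending on $n$ via a twisted-band formula, the elementary ideals would distinguish $\{p,q\}$ directly.
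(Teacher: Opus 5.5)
Your overall architecture (a two-term skein recursion in the twist boxes for $P$, then a Kauffman-polynomial coefficient for distinctness) is the right one. However, there is a genuine gap in the HOMFLYPT half, at exactly the step you call the key geometric input. You never identify the smoothed link, and what you put in its place is wrong. The notation $L_0(p+1)$, and the search for a link ``whose polynomial depends only on $p+q$'', treat the box as if its two strands were parallel, so that smoothing leaves $p+1$ crossings. The assertion that $A(p,q)$ and $A(p+2,q-2)$ are related by a symmetry sliding one twist region into the other would, if true, give $A(p,q)\cong A(p+2,q-2)$, contradicting the second half of the theorem.

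The paper quotes this theorem from Kanenobu without proof, but the mechanism is the one it isolates in Lemma~\ref{triviallink} and Proposition~\ref{sameHOMFLYPTpoly}. In both boxes the strands are oriented anti-parallel. So a crossing change takes $A(p,q)$ to $A(p\mp2,q)$, while the oriented smoothing collapses the whole box (after Reidemeister~I moves) to the $\infty$-tangle, and $A(\infty,q)\cong A(p,\infty)\cong U^2$ for \emph{all} $p,q$. A smoothed link of the form $A(\infty,q)$ could not depend only on $p+q$ unless it were independent of $q$ as well; that independence is the whole point. With it, your base-case comparison works at once: $A(p+2,q-2)$ and $A(p,q)$ both expand to $A(p,q-2)$ and $U^2$ with the same coefficients. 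Since $P_{U^2}=(v^{-1}-v)z^{-1}$, iterating gives closed forms such as $P_{A(p,q)}=v^{-p-q}\bigl(P_{A(0,0)}-1\bigr)+1$ for $p,q$ even, in the convention of Proposition~\ref{sameHOMFLYPTpoly}. The other parity classes work the same way, with representatives $(1,0)$, $(0,1)$, $(1,-1)$.

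For distinctness, the Kauffman polynomial is indeed the tool, and it is how the paper handles its own family in Theorem~\ref{specialeq}. Your opening remark that it ``will not suffice'' is mistaken, since $F$ is not determined by $P$. But the plan stops before the decisive step. At a twist crossing the Kauffman relation gives one smoothing equal to the $\infty$-tangle ($U^2$ again, with a writhe correction) and one equal to $A(p\mp1,q)$, which lies in the \emph{other} parity class. Unwinding this, the paper obtains $F_2(n,-n)-F_2(0,0)=-\tfrac{n^2}{4}\bigl(F_0(0,0)-F_0(1,-1)\bigr)$. So the coefficient of $pq=-n^2$ is $c=\tfrac14\bigl(F_0(0,0)-F_0(1,-1)\bigr)$, a difference of $F_0$ on the representatives $(0,0)$ and $(1,-1)$.

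Everything hinges on $c\neq0$, and this is not automatic: in the paper's family $K_0^0(0,0)\cong K_0^0(1,-1)$, so this coefficient vanishes for $i=0$. Establishing $c\neq0$ is the entire content of Proposition~\ref{K_{i}^j(n,-n)}. That proof reduces $F_0$ to the HOMFLYPT coefficient $P_0$ and then runs further skein computations on auxiliary two- and three-component links. You flag this as the hard step but give no route to it.

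Finally, your Alexander-module fallback cannot work as stated. The order of the module is the Alexander polynomial, a specialization of $P$ at $v=1$, hence constant on each parity class by the closed form above. So a splitting $\mathbb{Z}[t^{\pm1}]/(f_p)\oplus\mathbb{Z}[t^{\pm1}]/(f_q)$ with $f_n$ genuinely depending on $n$ is impossible.
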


\begin{remark}[\cite{K89}] \label{kanenobu89}
For any element $\beta$ of the 3-braid group $B_3$, let $A_{\beta}(p,q)$ be the knot represented by the diagram shown in Figure \ref{Kanenobuknot_89}.
	\begin{figure}[H]
		\centering
		\includegraphics[width=0.185\textwidth]{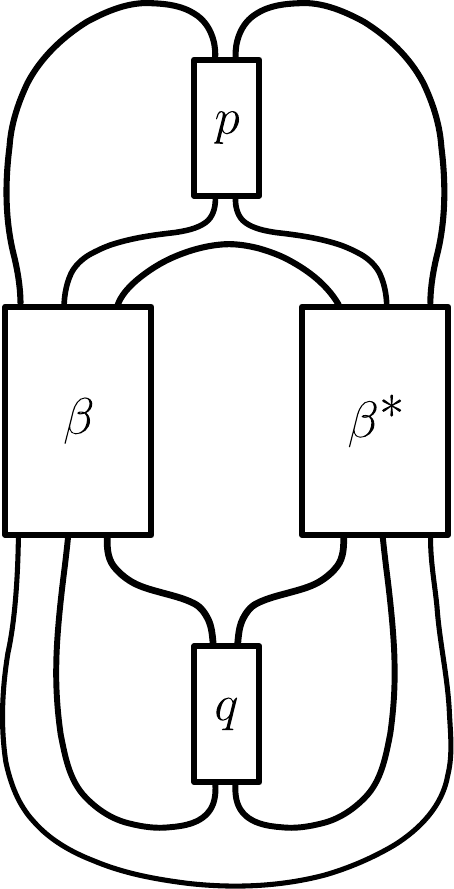}
		\caption{$A_{\beta}(p,q)$} \label{Kanenobuknot_89}
	\end{figure}
Here, $\beta^* \in B_3$ is defined by replacing the generators $\sigma_1^{\pm1}$ and $\sigma_2^{\pm1}$ in $\beta$ with $\sigma_2^{\mp1}$ and $\sigma_1^{\mp1}$, respectively.
This infinite family of knots is also known to share the same HOMFLYPT polynomial.
In particular, if we let $\beta = \sigma_2^2 \sigma_1^{-2}$, then $A(p, q)$ represents Figure \ref{Kanenobuknot_86}.
\end{remark}

\begin{theorem}[\cite{K25}] \label{kanenobu25}
For any $p \in \mathbb{Z}$, let $B(p)$ be the knot represented by the diagram shown in Figure \ref{Kanenobuknot_25_1p}.
\begin{figure}[H]
	\centering
	\includegraphics[width=0.2\textwidth]{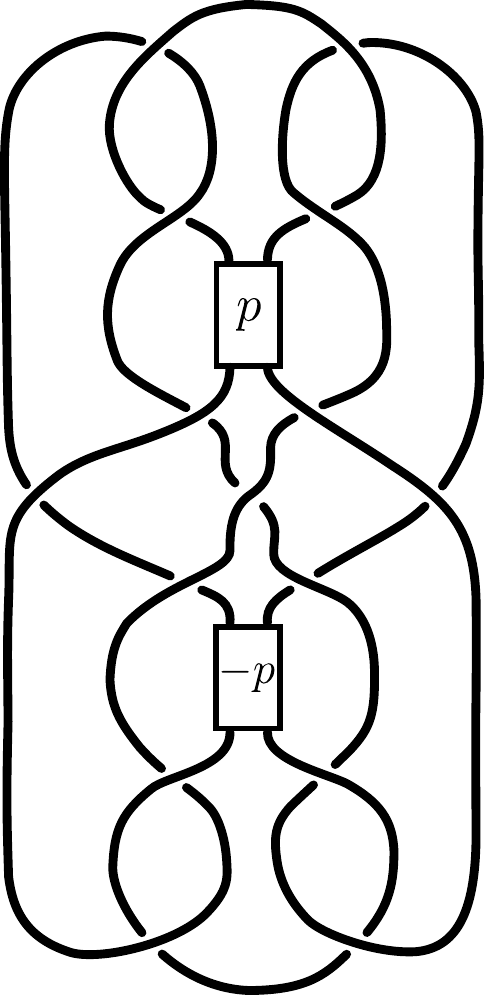}
	\caption{$B(p)$} \label{Kanenobuknot_25_1p}
\end{figure}
If $p \equiv p' \pmod{2}$, then $P_{B(p)}(v,z) = P_{B(p')}(v,z)$.
Furthermore, if $p \not= \pm p'$, then $B(p) \not\cong B(p')$.
\end{theorem}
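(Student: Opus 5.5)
The statement concerns $B(p)$ from \cite{K25}. Since we cannot see Figure \ref{Kanenobuknot_25_1p}, I assume $B(p)$ is obtained from a fixed diagram by inserting one $p$-tangle of Figure \ref{halftwist}. I also assume the diagram has a symmetry that exchanges two "halves" of the knot, as in the Kanenobu knots $A(p,q)$. The proof has two parts: the HOMFLYPT identity, and the non-equivalence.

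\emph{Same HOMFLYPT polynomial.} Apply the skein relation \eqref{skeinrel} at one crossing of the $p$-tangle. This gives
\[
P_{B(p+2)} = v^{2}P_{B(p)} + v z P_{B(\infty)\text{ or }B(0)},
\]
with the orientation deciding which resolution appears. A standard induction on the linear recurrence then writes
\[
P_{B(p)} = \alpha(p)\,X + \beta(p)\,Y,
\]
where $X,Y$ are the polynomials of two fixed diagrams, namely two consecutive members of the same parity class, together with the resolved diagram $B(\infty)$. The coefficients $\alpha,\beta$ are explicit in $v,z$.

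The point is to show that the $p$-dependence cancels. This requires the identity $P_{B(\infty)} = (v^{-1}-v)z^{-1}P_{B(p_0)}$, or a similar relation, which makes the recurrence degenerate to $P_{B(p+2)}=P_{B(p)}$. I would establish it by showing that the resolved diagram $B(\infty)$ is a split link, or a connected sum of a link with a Hopf-type piece, by an isotopy. Kanenobu's trick is exactly this: the ribbon-like symmetric structure makes the smoothed diagram split. Equivalently, I would show directly that $B(p)$ and $B(p+2)$ are related by a mutation-like move together with a twist whose skein contribution vanishes. Once the degeneration holds, induction on $|p-p'|/2$ gives $P_{B(p)}=P_{B(p')}$.

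\emph{Distinctness.} To show $B(p)\not\cong B(p')$ for $p\neq\pm p'$, I would use an invariant finer than HOMFLYPT. The first candidate is the Kauffman polynomial, as in the abstract. A more robust choice is the Alexander module or the signature combined with the Jones polynomial evaluated under a twisted coloring. Simplest would be hyperbolic volume or the Conway polynomial of a cable, but these are hard to compute. I would compute the Kauffman polynomial $F_{B(p)}$ via its skein relation at the same crossings. This gives a linear recurrence in $p$ whose characteristic roots are not roots of unity, since the Kauffman skein sees both $L_0$ and $L_\infty$. Hence $F_{B(p)}$ depends genuinely on $p$, and the top $a$-degree or $z$-degree term grows linearly in $|p|$.

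Symmetry under $p\mapsto -p$ should come from mirror image composed with the diagram's symmetry. This explains why only $p\neq\pm p'$ can be distinguished, and it is consistent with the Kauffman polynomial only distinguishing up to $|p|$.

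\emph{Main obstacle.} The hard step is the degeneration identity in the first part, i.e.\ identifying the resolved link up to isotopy so that the recurrence collapses. The second hard step is extracting a clean degree or coefficient from the Kauffman recurrence that is strictly monotone in $|p|$. For the latter, I would first try to compute the maximal $z$-degree coefficient.
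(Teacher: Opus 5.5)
\textbf{There is a genuine gap in both halves of your plan.}

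\textbf{The HOMFLYPT part.} The paper only quotes this theorem from Kanenobu. Its own generalization (Proposition~\ref{sameHOMFLYPTpoly}, Lemma~\ref{triviallink} and the proof of Theorem~\ref{specialeq}) shows how such a statement is proved, and your argument aims at an identity that does not hold there.

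Your instinct that the resolved diagram is split is right. The strands of each twist tangle are anti-parallel, and smoothing any of its crossings gives the trivial link $U^2$, with $P_{U^2}=(v^{-1}-v)z^{-1}$. But then your degeneration condition $P_{B(\infty)}=(v^{-1}-v)z^{-1}P_{B(p_0)}$ would force $P_{B(p_0)}=1$. That would give nontrivial knots with trivial HOMFLYPT polynomial, which is an open problem. With a single twist region the recurrence only yields $P_{B(p)}=v^{-p}\bigl(P_{B(0)}-1\bigr)+1$ for even $p$.

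The missing idea is that there are \emph{two} such tangles, carrying $p$ and $-p$ half-twists. Resolving both gives $P(n_0,\dots,n_k)=v^{-n_0-n_k}\bigl(P(0,n_1,\dots,n_{k-1},0)-1\bigr)+1$ for $n_0,n_k$ even, with analogous formulas for the other parities. The $p$-dependence disappears because $n_0+n_k=0$. The same structure explains the $\pm$ in the statement: the $180^\circ$ rotation exchanging the two tangles takes the $(p,-p)$ diagram to the $(-p,p)$ one. This is a rotation, not a mirror image.

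\textbf{The distinctness part.} The inference ``the characteristic roots are not roots of unity, hence $F_{B(p)}$ depends genuinely on $p$'' is a non sequitur. The HOMFLYPT recursion in each tangle also has the non-root-of-unity multiplier $v^{\pm 2}$. Yet $P_{B(p)}$ is independent of $p$, because the contributions of the two tangles cancel. Nothing general prevents the same cancellation in a given Kauffman coefficient, and ruling it out is the whole content of the claim. Your plan contains no computation that does this. The assertion that a top-degree coefficient grows linearly in $|p|$ is unsupported for these non-alternating diagrams. The Jones and Alexander polynomials are specializations of $P$, so they cannot help.

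In the paper's analogue, the unoriented skein relation at a crossing of an $n$-tangle produces the $(n-2)$-, $(n-1)$- and $\infty$-tangles. This couples the even and odd members of the family. Tracking the lowest coefficients gives
\begin{align*}
F_2(n,-n)-F_2(0,0)&=-\tfrac{n^2}{4}\bigl(F_0(0,0)-F_0(1,-1)\bigr) && (n\ \text{even}),\\
F_2(n,-n)-F_2(1,-1)&=\tfrac{n^2-1}{4}\bigl(F_0(0,0)-F_0(1,-1)\bigr) && (n\ \text{odd}).
\end{align*}
The quadratic dependence on $n$ accounts exactly for the $\pm$ ambiguity. Distinctness then reduces to the single nonvanishing statement $F_0(0,0)\neq F_0(1,-1)$ (Proposition~\ref{K_{i}^j(n,-n)}). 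This is proved by passing to the lowest HOMFLYPT coefficient $P_0$ and computing it through further skein resolutions (Lemmas~\ref{pm0} and~\ref{mp0}). Your proposal lacks both this reduction and the nonvanishing input.
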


These infinitely many knots are distinguished by the Kauffman polynomial \cite{Kau87, Kau90}.

\begin{definition}
The writhe $w(D)$ of a diagram $D$ of an oriented link is the sum of
the signs of the crossings of $D$, where each crossing has sign $+ 1$ or $-1$ as defined in Figure \ref{sign}.
	\begin{figure}[H]
	\centering
	\includegraphics[width=0.45\textwidth]{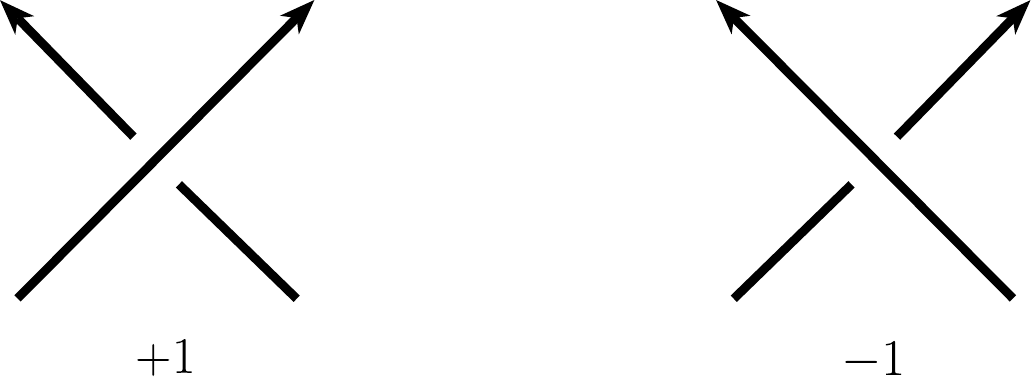}
	\caption{} \label{sign}
	\end{figure}
\end{definition}

\begin{definition}
For a diagram $D$ of an oriented link $L$, there uniquely exists $\Lambda_{D}(a,z) \in \mathbb{Z}[a^{\pm1}, z^{\pm1}]$ satisfying the following:
	\begin{enumerate}
		\item $\Lambda_U(a,z)=1$, where $U$ is the zero-crossing diagram of the unknot.
		\item $\Lambda_D(a,z)$ is unchanged by Reidemeister moves of Types $I\hspace{-1.2pt}I$ and $I\hspace{-1.2pt}I\hspace{-1.2pt}I$ on the diagram.
		\item $
\Lambda_{{\includegraphics[height=0.75em,align=c]{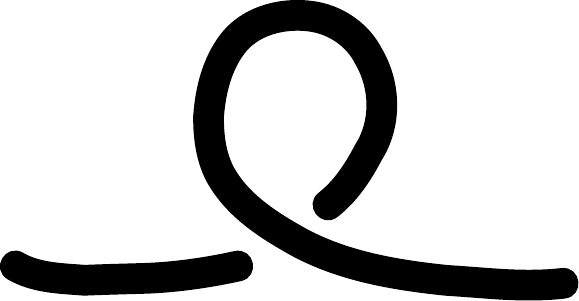}}}(a,z)=a\Lambda_{{\includegraphics[height=0.1em]{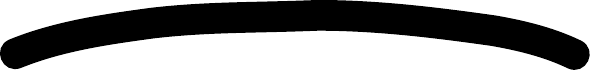}}}(a,z),\quad\Lambda_{{\includegraphics[height=0.75em,align=c]{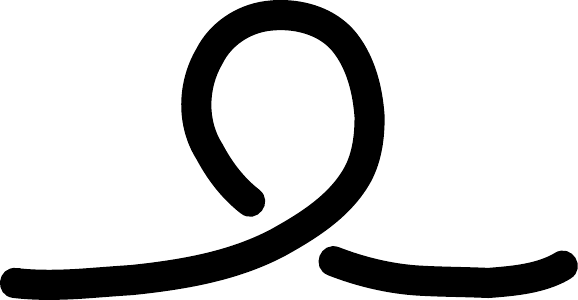}}}(a,z)=a^{-1}\Lambda_{{\includegraphics[height=0.1em]{kauffman_brakcet_R1_smooth.pdf}}}(a,z)$.
		\item If $D_{+},D_{-},D_{0},D_{\infty}$ are the diagram depicted in Figure \ref{skein_kauffman}, then
				\begin{align}
					\Lambda_{D_{+}}(a,z)+\Lambda_{D_{-}}(a,z)=z(\Lambda_{D_{0}}(a,z)+\Lambda_{D_{\infty}}(a,z)). \label{skein_kauffmanrel}
				\end{align}
	\end{enumerate}
\vspace{-10pt}
	\begin{figure}[H]
					\centering
					\includegraphics[width=0.7\textwidth]{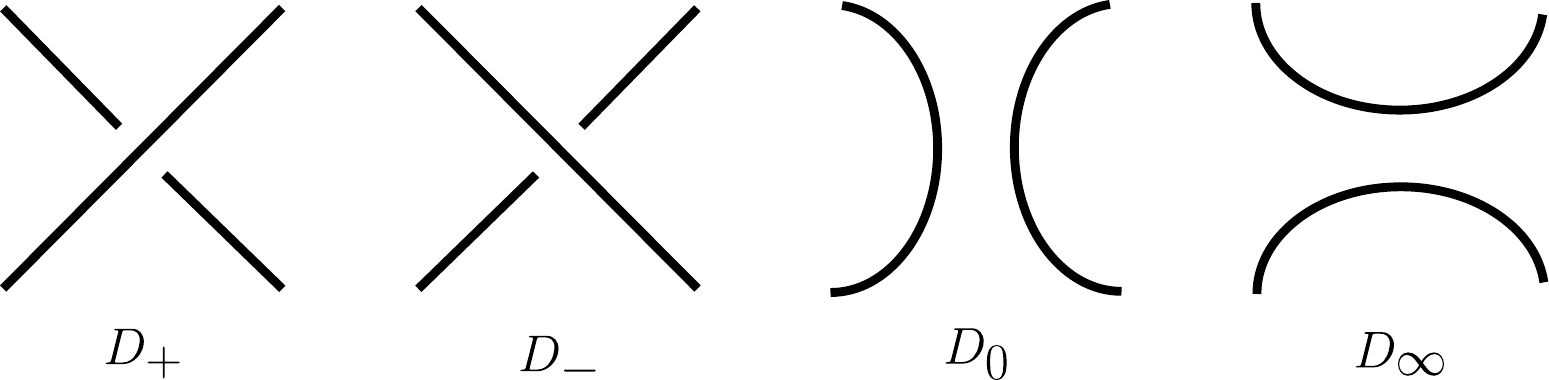}
					\caption{} \label{skein_kauffman}
		\end{figure}
\vspace{-15pt}
The Kauffman polynomial $F_L(a,z) \in \mathbb{Z}[a^{\pm1}, z^{\pm1}]$ is defined by $F_L(a,z)=a^{-w(D)}\Lambda_{D}(a,z) \in \mathbb{Z}[a^{\pm1},z^{\pm1}]$.
\end{definition}

\begin{remark}
Similarly to the HOMFLYPT polynomial, there exists $F_{i}(a;L) \in {\mathbb{Z}[a^{\pm1}]}$, where $i$ is a non-negative integer, such that
$$
\displaystyle{F_L(a,z)=\sum_{i \geq 0} F_{i}(a;L)z^{i+1-\#L}}.
$$
\end{remark}

\section{On the HOMFLYPT polynomial} \label{SameHOMFLYPTpoly}


In what follows, for any $k \in \mathbb{N}$, we choose elements $\beta_1, \dots, \beta_k \in B_3$ such that the diagram in Figure \ref{beta_knot} represents a knot.
	\begin{figure}[H]
		\centering
		\includegraphics[width=0.25\textwidth]{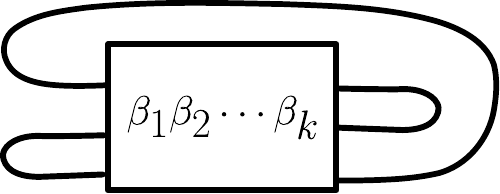}
		\caption{} \label{beta_knot}
	\end{figure}
	
\begin{proposition} \label{sameHOMFLYPTpoly}
For any $n_0, n_1, \dots, n_k \in \mathbb{Z}$, let $M_{\beta_1, \beta_2, \cdots, \beta_k}(n_0, n_1 ,\cdots, n_k)$ be the knot represented by the diagram shown in Figure \ref{Mknot_2607}.
	\begin{figure}[H]
		\centering
		\includegraphics[width=0.85\textwidth]{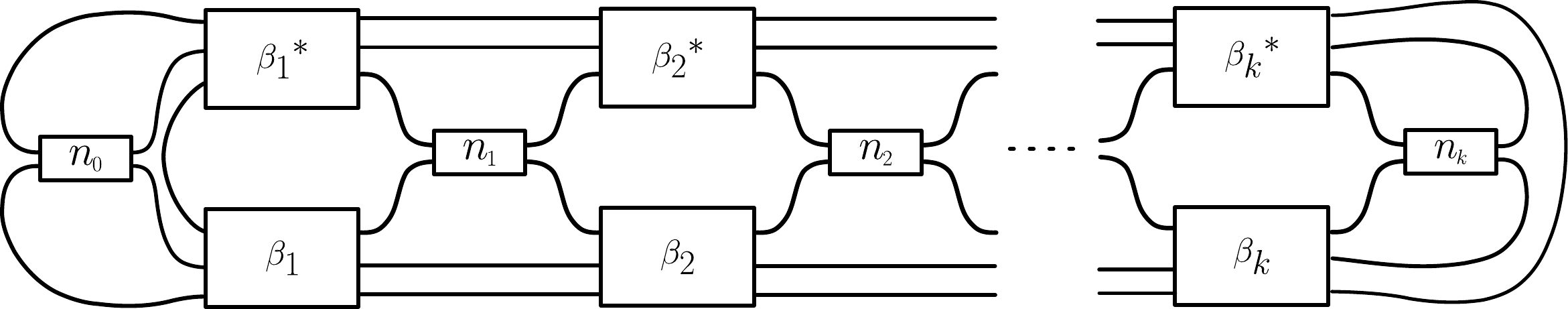}
		\caption{$M_{\beta_1, \beta_2, \cdots, \beta_k}(n_0, n_1 ,\cdots, n_k)$} \label{Mknot_2607}
	\end{figure}
Then, the HOMFLYPT polynomial of $M_{\beta_1, \beta_2, \cdots, \beta_k}(n_0, n_1 ,\cdots, n_k)$ is given by
\begin{align*}
		&P_{M_{\beta_1, \beta_2, \cdots, \beta_k}(n_0, n_1 ,\cdots, n_k)}(v,z) \\	
  		&= \begin{cases}
    		 v^{-n_0-n_k}(P_{M_{\beta_1, \beta_2, \cdots, \beta_k}(0, n_1 ,\cdots, n_{k-1} , 0)}(v,z)-1)+1, & \text{if $(n_0,n_k) \equiv (0,0) \pmod 2$,} \\
    		 v^{1-n_0-n_k}(P_{M_{\beta_1, \beta_2, \cdots, \beta_k}(1, n_1 ,\cdots, n_{k-1} , 0)}(v,z)-1)+1, & \text{if $(n_0,n_k) \equiv (1,0) \pmod 2$,} \\
    		 v^{1-n_0-n_k}(P_{M_{\beta_1, \beta_2, \cdots, \beta_k}(0, n_1 ,\cdots, n_{k-1} , 1)}(v,z)-1)+1, & \text{if $(n_0,n_k) \equiv (0,1) \pmod 2$,} \\
    		 v^{-n_0-n_k}(P_{M_{\beta_1, \beta_2, \cdots, \beta_k}(1, n_1 ,\cdots, n_{k-1} , -1)}(v,z)-1)+1, & \text{if $(n_0,n_k) \equiv (1,1) \pmod 2$.}
  		\end{cases}
	\end{align*}
\end{proposition}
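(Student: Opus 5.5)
The plan is to apply the skein relation \eqref{skeinrel} at a crossing inside the twist tangle $n_0$, and then separately at a crossing inside $n_k$, reducing each twist count by $2$ at a time. The underlying structure, as in Kanenobu's examples (Theorems \ref{kanenobu86} and \ref{kanenobu25}), should be that the $n_0$- and $n_k$-tangles are twist regions whose two strands are oppositely oriented. In that situation smoothing a crossing produces a diagram $L_0$ that does not depend on $n_0$. Moreover, $L_0$ should be a split-type diagram whose HOMFLYPT polynomial is governed only by the remaining data; the key claim is that it is effectively the unknot after the smoothing opens up the configuration of Figure \ref{Mknot_2607}. If instead the strands are parallel, the smoothing changes the number of components, and the plan would switch to a two-step recursion. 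The shape of the formula, $v^{-n}(P-1)+1$, strongly suggests the first alternative: the quantity $P_{M}-1$ gets multiplied by $v^{-2}$ per full twist.

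First I will verify this structure in the case where $n_0$ changes by $2$. Taking $L_+ = M(n_0,\dots)$, $L_- = M(n_0-2,\dots)$ (with the sign convention adjusted according to the orientation), and $L_0$ the smoothing, relation \eqref{skeinrel} gives
$$v^{-1}P_{M(n_0)}-vP_{M(n_0-2)}=zP_{L_0}.$$
I will check from Figure \ref{Mknot_2607} that $L_0$ is a two-component link built from the pieces $\beta_i$ and $\beta_i^*$. The aim is to show that $P_{L_0}$ equals $(v^{-1}-v)/z$, the value for the two-component trivial link. This should follow from the mirror/star symmetry $\beta\leftrightarrow\beta^*$ in Remark \ref{kanenobu89}. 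Concretely, after the smoothing, the diagram should be isotopic to one in which the $\beta_i$ and $\beta_i^*$ braids cancel through a flype or rotation of the closed 3-braid. This is the same mechanism that makes Kanenobu's $A_\beta(p,q)$ have HOMFLYPT polynomial depending only on $p+q$.

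Once that is established, the recursion reads $v^{-1}P_{M(n_0)}-vP_{M(n_0-2)}=v^{-1}-v$. Equivalently, $P_{M(n_0)}-1=v^{2}(P_{M(n_0-2)}-1)$; after fixing the sign conventions this will give the factor $v^{-n_0}$, so iterating reduces $n_0$ to $0$ or $1$ according to parity. The same argument applied to $n_k$ reduces it to $0$ or $\pm1$. The two reductions commute because each smoothing lemma holds for arbitrary values of the other parameter.

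The exponent bookkeeping then produces the four cases. In the $(1,1)$ case I choose the representative $(1,-1)$ instead of $(1,1)$. The reason is that shifting $n_k$ from $1$ to $-1$ is a single step of $-2$, which multiplies by $v^{2}$ and yields $v^{-n_0-n_k}$ uniformly; I expect this choice is also needed so that the two parity reductions fit together consistently.

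The main obstacle is the smoothing lemma, namely showing that the $L_0$ diagrams are trivial links, or at least have HOMFLYPT polynomial equal to that of the trivial link, independently of the $\beta_i$ and of the other $n_j$. I would first try a direct isotopy: after the smoothing, one component should become unknotted and unlinked once the twist box is cut, so that the chain of $\beta_i,\beta_i^*$ boxes untangles by successive flypes. If a direct isotopy fails, I would fall back on showing that $L_0$ is a connected sum or split union of a link with its "star" and invoking Kanenobu's argument from \cite{K89}.
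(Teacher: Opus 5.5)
This is essentially the paper's proof: you apply the skein relation at a crossing of the anti-parallel $n_0$- and $n_k$-tangles, where the crossing change shifts the parameter by $2$ and the smoothing gives the $\infty$-tangle diagram, which is $U^2$ by Lemma~\ref{triviallink} (the paper proves this, as you propose, by an explicit isotopy), so $P-1$ is multiplied by $v^{-2}$ per step. A few slips to tidy up: the smoothing is a $2$-component link, not ``effectively the unknot''; the factor $v^{-n_0}$ requires $L_-=M(n_0,\dots)$ and $L_+=M(n_0-2,\dots)$; and your fallback via a split union or connected sum of a link with its star would not give $P_{U^2}$ in general, so the isotopy is the step that actually has to be done.
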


We use the following lemma to prove Proposition \ref{sameHOMFLYPTpoly}.

\begin{lemma} \label{triviallink}
For any $n_0, n_1, \dots, n_k \in \mathbb{Z} \cup \{ \infty \}$, we have $M_{\beta_1, \beta_2, \cdots, \beta_k}(\infty, n_1 ,\cdots, n_k) \cong M_{\beta_1, \beta_2, \cdots, \beta_k}(n_0, n_1 ,\cdots, \infty) \cong U^2$, where $U^2$ is the trivial 2-component link.
\end{lemma}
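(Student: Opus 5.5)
Since Figure~\ref{Mknot_2607} is not reproduced in the text, I will assume the natural reading suggested by Kanenobu's construction in Theorem~\ref{kanenobu86} and Remark~\ref{kanenobu89}. The diagram of $M_{\beta_1,\dots,\beta_k}(n_0,\dots,n_k)$ is a chain of $3$-braid boxes $\beta_1,\dots,\beta_k$ (each with its partner $\beta_i^*$) joined by rational twist tangles $n_0,n_1,\dots,n_k$. The two \emph{end} tangles $n_0$ and $n_k$ sit where, in $A(p,q)$, the tangles $p$ and $q$ sit: each closes off the outermost strands of the chain. The plan is to show that replacing the $n_0$-tangle (respectively the $n_k$-tangle) by the $\infty$-tangle cuts the diagram into two pieces. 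These pieces are unlinked, and each one individually unknots.

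First treat $n_0=\infty$. The $\infty$-tangle consists of two arcs that are "vertical" relative to the twisting direction. I expect this to disconnect the band of strands passing through the $n_0$ position into two parallel pieces. The first step is to identify the two components explicitly. Because the diagram of Figure~\ref{beta_knot} was chosen to be a knot, the $\infty$-smoothing at the end produces exactly two components. One component runs through the $\beta_i$ braids and the other through the $\beta_i^*$ braids, just as in Kanenobu's picture, where $A_\beta(\infty,q)$ is a split union of the closures of $\beta$ and of $\beta^*$ glued back to itself.

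The key step is the cancellation $\beta\beta^{*}$. In Remark~\ref{kanenobu89}, $\beta^*$ is obtained from $\beta$ by the substitution $\sigma_1^{\pm1}\mapsto\sigma_2^{\mp1}$ and $\sigma_2^{\pm1}\mapsto\sigma_1^{\mp1}$. This is the mirror image combined with a flip of the $3$-braid, so $\beta^*$ is precisely what is obtained by reflecting $\beta$ through the plane of the band. After the $\infty$-tangle opens the band, the $\beta_i$ box and the $\beta_i^*$ box become connected end to end along the same strands. A flype/rotation of the box (turning the diagram over by $\pi$ about a horizontal axis) then carries $\beta_i^*$ onto $\beta_i^{-1}$ on those strands, so each pair cancels by Reidemeister II moves, one $i$ at a time. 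The twist tangles $n_1,\dots,n_{k-1}$ in the middle each become twists between strands that, after the cancellations, bound a disk, so they unwind by Reidemeister I and flypes. The result is a diagram of two disjoint circles, which is $U^2$.

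The case $n_k=\infty$ is the same argument read from the other end of the chain, using the symmetry of the construction that reverses the order of $\beta_1,\dots,\beta_k$. Note that the statement allows every $n_i$ to lie in $\mathbb{Z}\cup\{\infty\}$. The argument never uses the values of the other $n_i$, since every other tangle becomes a twist on a strand pair that is eventually untwisted. Hence the isotopy is uniform in those values.

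The main obstacle is the middle of the chain. For $k=1$ this is exactly Kanenobu's observation that $A_\beta(\infty,q)$ is trivial. For general $k$, one must check that opening the end tangle really propagates through every intermediate $n_i$-tangle, so that each $\beta_i$ meets its own $\beta_i^*$ rather than a neighbour's. I would handle this by induction on $k$: after cancelling $\beta_1\beta_1^*$, the $n_1$-tangle untwists, and the remaining diagram should be of the same form as the $\infty$-configuration for the chain $\beta_2,\dots,\beta_k$ with $n_1$ now playing the role of the end tangle.
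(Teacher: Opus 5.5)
Your overall plan (resolve the end tangle as $\infty$, cancel each $\beta_j$ against $\beta_j^*$ in turn, untwist the intermediate tangles, end with two disjoint circles) is the kind of explicit deformation the paper draws in Figure~\ref{Mknot_2607_deformation}. As written, however, the argument has a real gap at its central step.

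First, your identification of the components is wrong, and it is inconsistent with what follows. A component running only through the $\beta_i$-boxes would be built from those boxes alone, hence knotted for general $\beta_i$. For the same reason, $A_\beta(\infty,q)$ is not a split union of closures of $\beta$ and $\beta^*$, since that would be nontrivial in general. Moreover, $\beta_i$ and $\beta_i^*$ could then never lie on the same strands and cancel.

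Second, the cancellation is not justified by a flype. Turning a box over about an in-plane axis carries $\beta_i^*$ either to $\beta_i$ with all exponents negated, or to $\beta_i^{-1}$ with $\sigma_1\leftrightarrow\sigma_2$. Neither is $\beta_i^{-1}$ on the same strands. Only the planar rotation by $\pi$ produces $\beta_i^{-1}$, and that is not an isotopy of a box sitting in a diagram.

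What your induction needs to state and verify is the following configuration, which you only assert (``should be of the same form'').
\begin{itemize}
\item Replacing the $n_0$-tangle by $\infty$ joins the three ends of $\beta_1$ to the corresponding ends of $\beta_1^*$ by nested arcs around the end of the diagram.
\item Since $\beta_1^*$ is the mirror image of $\beta_1$ in the axis carrying the twist tangles, the crossing of $\beta_1$ nearest these arcs and its mirror crossing form a bigon: the same strand is over at both, or under at both.
\item A Reidemeister II move removes this bigon, leaving the nested arcs one crossing further in.
\item When $\beta_1$ is exhausted, the innermost arc caps off the $n_1$-tangle. Its crossings are then kinks, removed by Reidemeister I moves, and you are in the same position in front of $\beta_2$.
\item At the far end, the nested arcs meet the closing arcs of the diagram and give two disjoint circles.
\end{itemize}

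Finally, your claim that the isotopy is uniform in the remaining parameters fails for the value $\infty$, and the printed statement overreaches in the same way. For $n_0=n_k=\infty$, the diagram is obtained from $M_{\beta_1,\dots,\beta_k}(\infty,n_1,\dots,n_{k-1},1)\cong U^2$ by the oriented smoothing of the crossing of the anti-parallel $1$-tangle. It therefore has $1$ or $3$ components and cannot be $U^2$. The lemma should be read with the other parameters in $\mathbb{Z}$, which is all the proof of Proposition~\ref{sameHOMFLYPTpoly} uses.
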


\begin{proof}
When $n_0 = \infty$, since the orientation inside the tangle is anti-parallel, the diagram can be deformed as shown in Figure \ref{Mknot_2607_deformation}.
	\begin{figure}[H]
		\centering
		\includegraphics[width=1.0\textwidth]{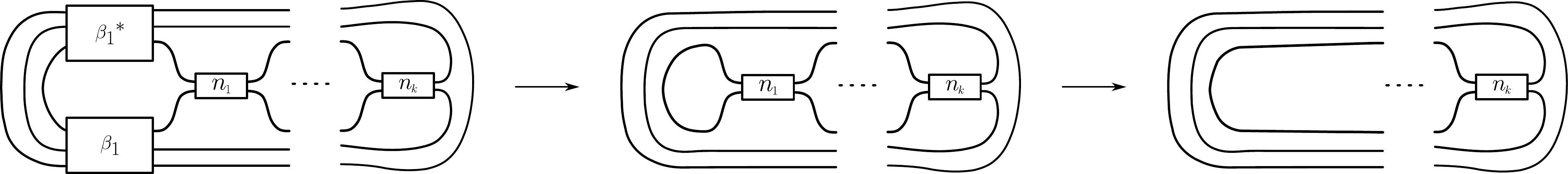}
		\caption{$M_{\beta_1, \beta_2, \cdots, \beta_k}(\infty, n_1 ,\cdots, n_k)$} \label{Mknot_2607_deformation}
	\end{figure}
This is the desired conclusion.
The case $n_k = \infty$ can be shown similarly, and we can prove Lemma \ref{triviallink}.
\end{proof}

\begin{proof}[\indent\sc{Proof of Proposition \ref{sameHOMFLYPTpoly}.}]
We consider crossing changes and smoothings at the crossings in the $n_0$- and $n_k$-tangles.
By the symmetry of the diagram, note that the orientation of both tangles is anti-parallel.
The knots obtained by a crossing change and a smoothing at a crossing in the $n_0$-tangle  are $M_{\beta_1, \beta_2, \cdots, \beta_k}(n_0-2, n_1,\cdots, n_k)$, and are $U^2$ by Lemma \ref{triviallink}, respectively.
Similarly, the knots obtained by a crossing change and a smoothing at a crossing in the $n_k$-tangle are $M_{\beta_1, \beta_2, \cdots, \beta_k}(n_0, n_1,\cdots, n_k-2)$, and are $U^2$ by Lemma \ref{triviallink}, respectively.
For simplicity, let  $P(n_0, n_1 ,\cdots, n_{k-1} , n_k)(v,z)$ be the HOMFLYPT polynomial of $M_{\beta_1, \beta_2, \cdots, \beta_k}(n_0, n_1 ,\cdots, n_{k-1} , n_k)$.
When $n_0$ and $n_k$ are positive and even, we have
\begin{align*}
	P(n_0, n_1 ,\cdots, n_{k-1} , n_k)(v,z) &= v^{-n_0} P(0, n_1 ,\cdots, n_{k-1} , n_k)(v,z) \\
											          &\quad  - (v^{-n_0+2} + \cdots + v^{-2} + 1)v^{-1}z P_{U^2}(v,z) \\
											          &= v^{-n_0-n_k} P(0, n_1 ,\cdots, n_{k-1} , 0)(v,z) \\
											          &\quad  - (v^{-n_0-n_k+2} + \cdots + v^{-2} + 1)v^{-1}z P_{U^2}(v,z)
\end{align*}
by the recursive formula for $P(n_0, n_1 ,\cdots, n_{k-1} , n_k)$.
Since $P_{U^2}(v,z) = (v^{-1}-v)z^{-1}$, we have 
\begin{align*}
	P(n_0, n_1 ,\cdots, n_{k-1} , n_k)(v,z) &= v^{-n_0-n_k} P(0, n_1 ,\cdots, n_{k-1} , 0)(v,z) \\
											          &\quad  - (v^{-n_0-n_k+2} + \cdots + v^{-2} + 1)v^{-1}z P_{U^2}(v,z) \\
											          &= v^{-n_0-n_k} (P(0, n_1 ,\cdots, n_{k-1} , 0)(v,z) -1) + 1.
\end{align*}
In the case where either $n_0$ or $n_k$ is a negative even integer, a similar calculation leads to the same conclusion.

Furthermore, when at least one of $n_0$ and $n_k$ is odd, by using
\begin{align*}
	v^{-1} P(1, n_1 ,\cdots, n_{k-1} , n_k)(v,z) -v P(-1, n_1 ,\cdots, n_{k-1} , n_k)(v,z) &= zP_{U^2}(v,z), \\
	v^{-1} P(n_0, n_1 ,\cdots, n_{k-1} , 1)(v,z) -v P(n_0, n_1 ,\cdots, n_{k-1} , -1)(v,z) &= zP_{U^2}(v,z)
\end{align*}
and we obtain the desired formula similarly.
This completes the proof.
\end{proof}


\section{special example} \label{special example}
\subsection{Special example} \label{prop sp. e.g.}
For fixed $\beta_1, \beta_2, \cdots, \beta_k \in B_3$, $\{ M_{\beta_1, \beta_2, \cdots, \beta_k}(n_0, n_1 ,\cdots, n_k) \mid n_0, n_1 ,\cdots, n_k \in \mathbb{Z} \}$ are infinitely many knots sharing the same HOMFLYPT polynomial, but some of them maybe same.
In this section, in some cases, we show that they are indeed distinct knots.
In what follows, we consider the case $\beta_1= \sigma_2^2, \beta_2 = \cdots =\beta_{3i} = \sigma_1^{-1} \sigma_2, \beta_{3i+1} = \sigma_1^{-1} \sigma_2 \sigma_1^{-1} $ and $n_1 = n_2 = \cdots = n_{3i} = 2$ in Figure \ref{Mknot_2607}.

\begin{definition}
For any $p, q \in \mathbb{Z} \cup \{\infty\}$ and $i \in \mathbb{N} \cup \{ 0 \}$, let $K_{i}^0(p, q)$ be the link represented by the diagram shown in Figure \ref{example_2607_(n_1 n_2)_0}.
	\begin{figure}[H]
		\centering
		\includegraphics[width=0.65\textwidth]{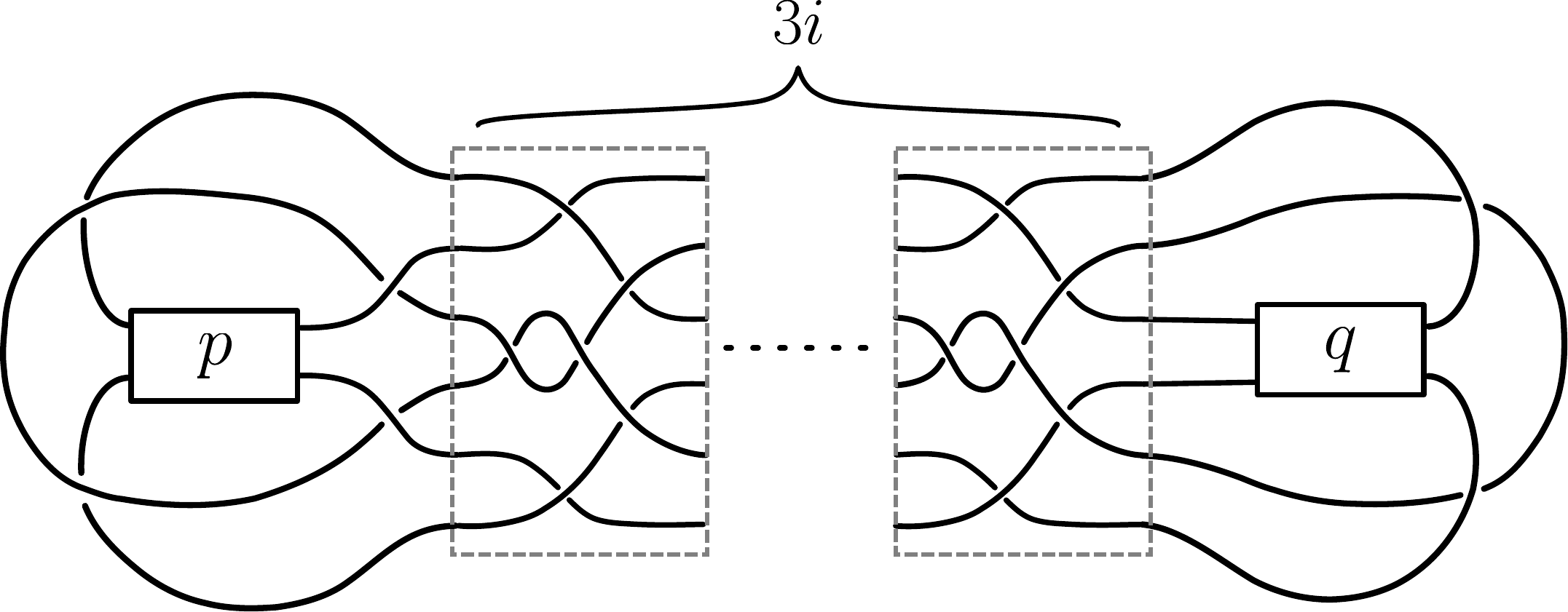}
		\caption{$K_{i}^0(p, q)$} \label{example_2607_(n_1 n_2)_0}
	\end{figure}
\end{definition}

\begin{theorem}\label{specialeq}
If $n, n'\in \mathbb{N}$ and $n \equiv n' \pmod{2}$, then $P_{K_{i}^0(n,-n)}(v,z) = P_{K_{i}^0(n',-n')}(v,z)$ for any $i \in \mathbb{N}$.
Furthermore, if $n \not= \pm n'$, then $K_{i}^0(n,-n) \not\cong K_{i}^0(n',-n')$.
\end{theorem}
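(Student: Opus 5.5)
The plan has two parts: equality of HOMFLYPT polynomials via Proposition \ref{sameHOMFLYPTpoly}, and distinctness via the Kauffman polynomial.

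\emph{Equality of the HOMFLYPT polynomials.} I first identify $K_i^0(p,q)$ with a member of the family $M_{\beta_1,\dots,\beta_{3i+1}}(n_0,n_1,\dots,n_{3i+1})$. The braids are the ones fixed at the start of this section, $n_1=\dots=n_{3i}=2$, $n_0=p$ and $n_{3i+1}=q$. For $(p,q)=(n,-n)$ we have $n_0+n_k=0$ and $(n_0,n_k)\equiv(n,n)\pmod 2$. Proposition \ref{sameHOMFLYPTpoly} then gives
\[
P_{K_i^0(n,-n)}(v,z)=P_{K_i^0(0,0)}(v,z) \quad\text{or}\quad P_{K_i^0(n,-n)}(v,z)=P_{K_i^0(1,-1)}(v,z),
\]
according as $n$ is even or odd. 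In both cases the prefactor is $v^{0}$, so the right-hand side depends only on the parity of $n$, which proves the first claim. The only check needed is that the figure of $K_i^0(p,q)$ really matches Figure \ref{Mknot_2607} with these data, including the orientation (anti-parallel in the end tangles) used in the proof of Proposition \ref{sameHOMFLYPTpoly}.

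\emph{Distinctness.} I compute the Kauffman polynomial of $K_i^0(n,-n)$ by running the skein relation \eqref{skein_kauffmanrel} on the two end twist tangles. For an $n$-twist tangle, the unoriented relation gives a second-order linear recurrence in $n$: $T_n+T_{n-2}=z(T_{n-1}+T_\infty)$ in the Kauffman skein module of the tangle. Its solution is
\[
T_n=\alpha_n T_0+\gamma_n T_1+\delta_n T_\infty,
\]
where the coefficients are explicit Chebyshev-like polynomials in $z$, with roots $\lambda^{\pm1}$ of $x^2-zx+1$. Substituting into both ends, $\Lambda_{K_i^0(p,q)}$ becomes a bilinear combination of the values $\Lambda$ at $p,q\in\{0,1,\infty\}$, that is, of nine "basic" links.

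By Lemma \ref{triviallink}, every basic link with an $\infty$ entry is a trivial 2-component link, possibly carrying a framing factor. So only $K_i^0(a,b)$ with $a,b\in\{0,1\}$ genuinely contribute. I then read off a coefficient that depends on $n$ in a way that is not symmetric under replacing $n$ by $n'$ with $n'\neq\pm n$. The natural candidates are the highest power of $z$, or the coefficient $F_{\max}(a;K)$, whose degree in $z$ or $a$-span should grow linearly in $|n|$.

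\emph{Main obstacle.} The main obstacle is to prove that the relevant cross term, essentially $\Lambda_{K_i^0(0,0)}$ minus the other basic terms in suitable combination, is nonzero for every $i$. Only then does the $n$-dependence survive, so that $\deg_z F$ (or the leading coefficient) determines $|n|$. I would first try to compute $F$ of the basic links $K_i^0(0,1)$, $K_i^0(1,0)$, $K_i^0(1,1)$, $K_i^0(0,0)$ inductively in $i$. Their $\sigma_1^{-1}\sigma_2$ blocks with $2$-twists should again yield a linear recurrence in $i$. I would then show that a specific extremal coefficient (e.g. the top $z$-degree term) of the cross term equals $\pm a^{m}$ for some explicit $m$. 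Granting this, the top $z$-degree of $F_{K_i^0(n,-n)}$ is $|n|+c_i$ for a constant $c_i$, so $|n|$ is an invariant. This distinguishes $K_i^0(n,-n)$ from $K_i^0(n',-n')$ whenever $n\neq\pm n'$, mirroring Kanenobu's argument for Theorem \ref{kanenobu25}.
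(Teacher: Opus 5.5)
Your HOMFLYPT argument is the same as the paper's: apply Proposition~\ref{sameHOMFLYPTpoly} with $n_0+n_k=0$. The distinctness half has a genuine gap. The whole content of that claim is the nonvanishing of your ``cross term'', and you explicitly grant it rather than prove it. Your choice of invariant also makes it hard to establish. To control the top $z$-degree of $F_{K_i^0(n,-n)}$ you need the $z$-degrees and leading coefficients of the full Kauffman polynomials of the basic links $K_i^0(a,b)$, $a,b\in\{0,\pm1\}$, for every $i$. In your expansion, the coefficients in front of these basic links are products of two Chebyshev-type polynomials. Each factor has $z$-degree about $|n|$, but the degrees differ by one between the $T_0$ and $T_1$ slots. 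So leading terms coming from different basic links compete, and cancellation cannot be excluded without that data. (The expected growth is also about $2|n|$, not $|n|+c_i$.) The proposed induction in $i$ for the full polynomials is not carried out either. The unoriented smoothings at the $\sigma_1^{-1}\sigma_2$ blocks and the $2$-twists produce links outside the family, so it is not clear that the recursion closes.

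The paper avoids all of this by using the low-order coefficient $F_2$ instead of the top degree. It combines Kanenobu's twist recursion, the identities for $A_n,B_n,C_n$, the symmetry $K_i^0(0,1)\cong K_i^0(1,0)$, and the relation expressing $F_1(0,-1)$ through $F_1(0,1)$, $F_1(0,\infty)$ and $F_0(0,0)$. With these, the $F_1$-terms cancel and one gets
\[
F_2(n,-n)-F_2(0,0)=-\tfrac{n^2}{4}\bigl(F_0(0,0)-F_0(1,-1)\bigr)\quad (n \text{ even}),
\]
\[
F_2(n,-n)-F_2(1,-1)=\tfrac{n^2-1}{4}\bigl(F_0(0,0)-F_0(1,-1)\bigr)\quad (n \text{ odd}).
\]
So the cross term is just $F_0(K_i^0(0,0))-F_0(K_i^0(1,-1))$. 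This quantity is computable because $F_0(a;L)=P_0(\sqrt{-1}v;L)$, and Proposition~\ref{K_{i}^j(n,-n)} proves it nonzero. That proof is a $P_0$-skein induction in $i$ that couples $K_i^0$ and $K_i^1$. Every auxiliary link that appears has $P_0$ fixed by linking numbers and the $P_0$ of its components: the two-component links via twist knots and $J_i^j$, and the three-component links are unknotted with equal linking numbers, so they cancel. The argument ends with a comparison of minimal $v$-degrees. To complete your proof you need either this reduction to a lowest-coefficient, HOMFLYPT-computable quantity, or an actual computation of the top-degree data you currently assume.
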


\begin{proof}[\indent\sc{Proof of Proposition \ref{specialeq}.}]
From Proposition \ref{sameHOMFLYPTpoly}, we have
	\begin{equation}
	  P_{K_{i}^0(n,-n)}(v,z) =
	  \begin{cases*}
	    P_{K_{i}^0(0,0)}(v,z) 	& $n \equiv 0 \pmod{2}$, \\
	    P_{K_{i}^0(1,-1)}(v,z) 	& $n \equiv 1 \pmod{2}$.
	  \end{cases*}
	\end{equation}
Thus, if $n \equiv n' \pmod{2}$, then $P_{K_{i}^0(n,-n)}(v,z) = P_{K_{i}^0(n',-n')}(v,z)$.
To show that these knots are distinct, we calculate the 2nd coefficient of the Kauffman polynomial of them.

Let $F_m(p, q)$ be the $m$-th coefficient polynomial of the Kauffman polynomial of $K_{i}^0(p, q)$.
Under this setting, we calculate $F_2(n, -n)$.

If $p, q \in \mathbb{Z}$, the writhe of the diagram Figure \ref{example_2607_(n_1 n_2)_0} is $-(p+q+6i)$.
If exactly one of $p$ and $q$ is $\infty$, the writhe of diagram Figure \ref{example_2607_(n_1 n_2)_0} obtained by applying Reidemeister move I to the tangle is $-(p+q+6i-1)$.

By Lemma \ref{triviallink}, $K_i^0(n,\infty) \cong K_i^0(\infty,n) \cong U^2$, we have $F_m (n,\infty) = F_m(\infty,n) = F_m(U^2)$, where $F_m(U^2)$ is the $m$-th coefficient polynomial of the Kauffman polynomial for $U^2$.

When $n$ is even, by combining these with the recursive formula for the Kauffman polynomial \cite{K89}*{Proposition 2.2} and comparing each coefficient polynomial of the Kauffman polynomial, we have
\begin{align*}
						F_2(n,-n)	&= a^{n} \big( (-1)^{\frac{n}{2}} F_2(0,-n) - (-1)^{\frac{n}{2}} \frac{n}{2} a^{-1}  F_1(1,-n) + B_n F_1(U^2) \\
										&\quad - (-1)^{\frac{n}{2}} \frac{n^2-2n}{8}F_0(0,-n) + C_n  F_0(U^2) \big)\\
										&= F_2(0,0) - \frac{n}{2} a F_1(0,-1) + (-1)^{\frac{n}{2}} B_{-n} F_1(U^2) \\
										&\quad - \frac{n^2 - 2n}{8}F_0(0,0) + (-1)^{\frac{n}{2}} C_{-n} F_0(U^2) \\
										&\quad - \frac{n}{2} a^{-1} \bigg( F_1(1,0) + (-1)^{\frac{n}{2}} A_{-n} F_1(U^2) - \frac{n}{2} a F_0(1,-1) + (-1)^{\frac{n}{2}} B_{-n} F_0(U^2)  \bigg) \\
  										&\quad - \frac{n^2-2n}{8} \big( F_0(0,0) + (-1)^{\frac{n}{2}} A_{-n} F_0(U^2) \big) \\
  										&\quad + a^{n}B_n F_1(U^2) + a^{n}C_n F_0(U^2), 
\end{align*}

where

\begin{equation*}
	\sigma_n =
  \begin{cases}
    1 & \text{if $n \geq 0$,} \\
    -1 & \text{if $n < 0$,}
  \end{cases}
\end{equation*}

and $A_n, B_n, C_n$ are defined by  

\begin{equation*}
	A_n =
  \begin{cases}
    \displaystyle{ \sum_{i=1}^{\frac{|n|}{2}} (-1)^{i+1} a^{\sigma_n(2i-|n|-1)} } & \text{if $n \equiv 0 \pmod{2}$,} \\
    \displaystyle{ \sum_{i=1}^{\frac{|n|-1}{2}} (-1)^{i+1} a^{\sigma_n(2i-|n|-1)} } & \text{if $n \equiv 1 \pmod{2}$,}
  \end{cases}
\end{equation*}

\begin{equation*}
	B_n =
  \begin{cases}
    \displaystyle{\sum_{i=1}^{\frac{|n|}{2}-1} (-1)^{i+1} i a^{\sigma_n(2i - |n| )} } & \text{if $n \equiv 0 \pmod{2}$,} \\
    \displaystyle{ \sum_{i=1}^{\frac{|n|-1}{2}} (-1)^{i+1} i a^{\sigma_n(2i - |n|)} } & \text{if $n \equiv 1 \pmod{2}$,}
  \end{cases}
\end{equation*}

\begin{equation*}
	C_n =
  \begin{cases}
    \displaystyle{ \sum_{i=1}^{\frac{|n|}{2}-1} (-1)^{i+1} \frac{i(i+1)}{2} a^{\sigma_n(2i - |n| + 1)} } & \text{if $n \equiv 0 \pmod{2}$,} \\
    \displaystyle{ \sum_{i=1}^{\frac{|n|-1}{2}-1} (-1)^{i+1} \frac{i(i+1)}{2} a^{\sigma_n(2i - |n| + 1)} } & \text{if $n \equiv 1 \pmod{2}$.}
  \end{cases}
\end{equation*}

Simplifying this, we have
\begin{align*}  						
						F_2(n,-n) &= F_2(0,0) - \frac{n^2-2n}{4} F_0(0,0) + \frac{n^2}{4} F_0(1,-1) \\
									 & - \frac{n}{2} \big(a F_1(0,-1) + a^{-1} F_1(1,0) - F_1(U^2) \big),			
\end{align*}  
where we use the identities
\begin{align*}
	&(-1)^{\frac{n}{2}}B_{-n} - (-1)^{\frac{n}{2}} \frac{n}{2} a^{-1}A_{-n} + a^{n}B_n = \frac{n}{2}, \\
	&(-1)^{\frac{n}{2}}C_{-n} - (-1)^{\frac{n}{2}} \frac{n}{2} a^{-1}B_{-n} - (-1)^{\frac{n}{2}} \frac{n^2-2n}{8}A_{-n} + a^{n}C_n = 0.
\end{align*}

Note that the diagram of $K_i^0(q, p)$ is obtained from the diagram of $K_i^0(p, q)$ by rotating along the $y$-axis by the angle  $180^\circ$. 
Since $K_i^0(0,1) \cong K_i^0(1,0)$ and
\begin{align*}
	F_1(0,-1) = -a^{-2} F_1(0,1) + a^{-1}\big( F_1(0,\infty) +F_0(0,0) \big), 
\end{align*}
and we obtain
\begin{align*}
	F_2(n,-n)	- F_2(0,0) = -\frac{n^2}{4} \big( F_0(0,0) - F_0(1,-1) \big).	 
\end{align*}

Similarly, when $n$ is odd, we obtain
\begin{align*}
  F_2(n,-n)	- F_2(1,-1) = \frac{n^2-1}{4} \big( F_0(0,0) - F_0(1,-1) \big).	
\end{align*}
Now, we use the following proposition, which is proved in the next section.

\begin{proposition} \label{K_{i}^j(n,-n)}
For any $p, q \in \mathbb{Z} \cup \{\infty\}$ and $i \in \mathbb{N}$, let $K_{i}^1(p, q)$ be the link represented by the diagram shown in Figure \ref{example_2607_2_(n_1 n_2)_1}.
	\begin{figure}[H]
		\centering
		\includegraphics[width=0.65\textwidth]{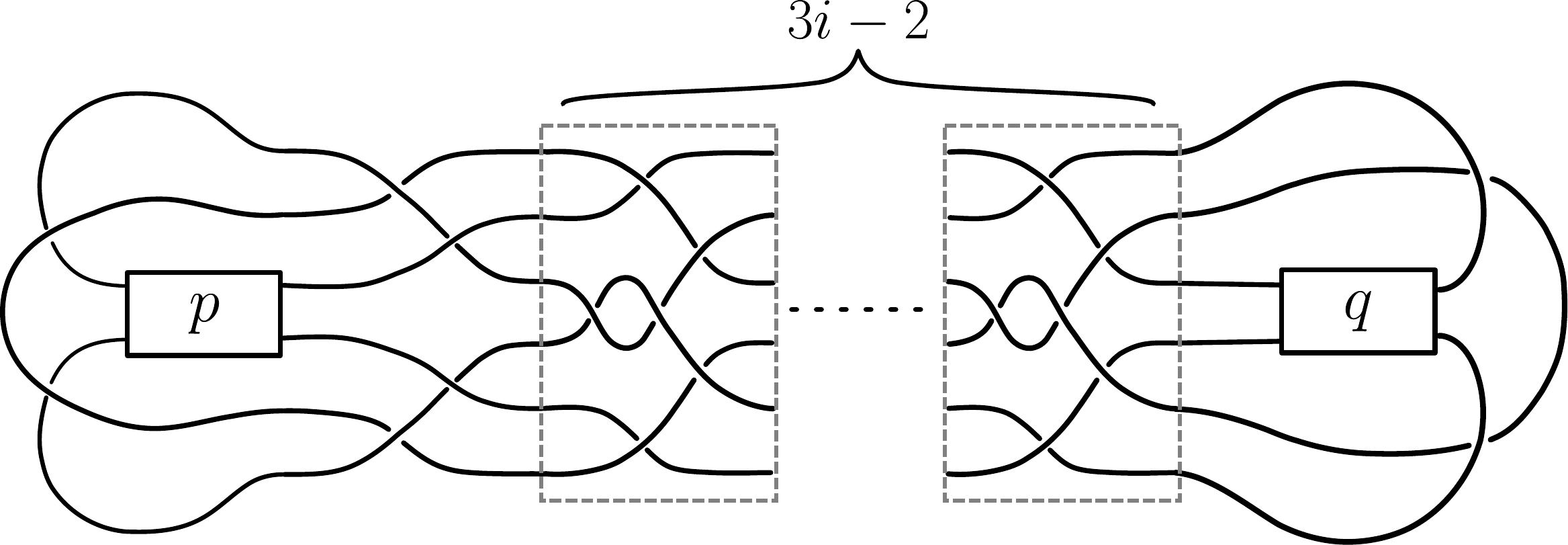}
		\caption{$K_{i}^1(p, q)$} \label{example_2607_2_(n_1 n_2)_1}
	\end{figure}
Then, for $j=0,1$ and any $i \in \mathbb{N}$, we have $F_0(a; K_i^j(1,-1)) \not= F_0(a; K_i^j(0,0))$.
\end{proposition}

By Proposition \ref{K_{i}^j(n,-n)}, we obtain
\begin{align*}
	F_2(n,-n)	 - F_2(0,0) = -\frac{n^2}{4} \big( F_0(0,0) - F_0(1,-1) \big) \not= 0.	 
\end{align*}
when $n$ is even, and
\begin{align*}
	F_2(n,-n) - F_2(1,-1) = \frac{n^2-1}{4} \big( F_0(0,0) - F_0(1,-1) \big) \not= 0.	
\end{align*}
when $n$ is odd.
From this, $F_2(n,-n) = F_2(n',-n')$ holds if and only if $n= \pm n'$.
Therefore, if $n \not= \pm n'$, then $K_{i}^0(n,-n) \not\cong K_{i}^0(n',-n')$.
This completes the proof of Theorem~\ref{specialeq}.
\end{proof}

\subsection{The Proof of Propositon \ref{K_{i}^j(n,-n)}} 

In this section, we prove Propositon \ref{K_{i}^j(n,-n)}.
For any link $L$, since $F_0(a; L) = P_0(\sqrt{-1}v; L)$, it suffices to show that $P_0(v; K_i^j(1,-1)) \not= P_0(v; K_i^j(0,0))$.
For simplicity, we denote $P_0(\sqrt{-1}v; L)$ by $P_0(L)$.
We consider a crossing change and a smoothing at the two crossings in Figure \ref{example_2607_(n_1 n_2)}, $c_1, c_2 $, of $K^j_i(p, q)$.
	\begin{figure}[H]
		\centering
		\includegraphics[width=0.65\textwidth]{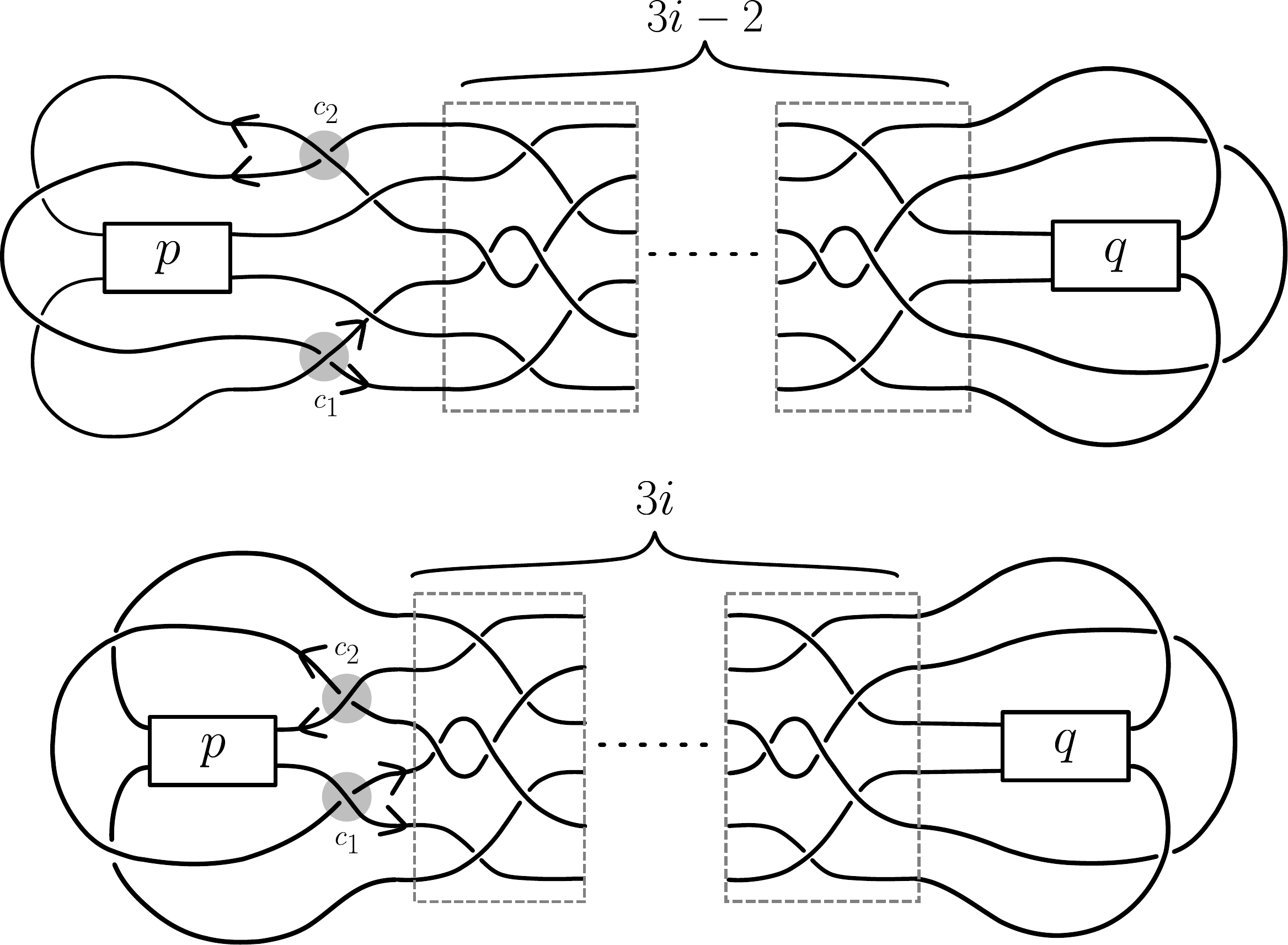}
		\caption{$K_{i}^1(p, q)$ and $K_{i}^0(p, q)$} \label{example_2607_(n_1 n_2)}
	\end{figure}
Here, the orientations are given as shown in Figure \ref{example_2607_(n_1 n_2)}.
For simplicity, we denote $K_{i}^j(n,-n)$ by $K_{i}^j(n)$.
Furthermore, we denote by $K_i^1(n)_{+ -}$, $K_i^1(n)_{0 0}$, $K_i^1(n)_{+ 0}$, and $K_i^1(n)_{0 -}$ the knots obtained from $K_i^1(n)$ 
by changing $c_1$ and $c_2$ to a positive crossing and a negative crossing, 
by smoothing $c_1$ and $c_2$, 
by changing $c_1$ to a positive crossing and by smoothing $c_2$, 
and by smoothing $c_1$ and by changing $c_2$ to a negative crossing, respectively.
We define $K_i^0(n)_{- +}$, $K_i^0(n)_{0 0}$, $K_i^0(n)_{- 0}$, and $K_i^0(n)_{0 +}$ similarly.
Note that,
\begin{align*}
	K_{i}^1(n)_{+ -} = K_{i-1}^0(n+2,-n) ,\quad K_{i}^0(n)_{- +} = K_{i}^1(n+2,-n).
\end{align*}
Equation~\eqref{skeinrel} and a direct calculation yield
\begin{align*} 
	P_0(K_{i}^1(n)) &= P_0(K_{i}^1(n)_{+-}) + v^{-1} P_0(K_{i}^1(n)_{+0}) -v P_0(K_{i}^1(n)_{0-}) - P_0(K_{i}^1(n)_{00}) \\
						&= v^{-2} P_0(K_{i-1}^0(n)) - v^{-1} P_0(U^2) \\
					   &\quad + v^{-1} P_0(K_{i}^1(n)_{+0}) -v P_0(K_{i}^1(n)_{0-}) - P_0(K_{i}^1(n)_{00}),\\
	P_0(K_{i}^0(n)) &= P_0(K_{i}^0(n)_{-+}) - v P_0(K_{i}^0(n)_{-0}) +v^{-1} P_0(K_{i}^0(n)_{0+}) -  P_0(K_{i}^0(n)_{00}) \\
						&= v^{-2} P_0(K_{i}^1(n)) - v^{-1} P_0(U^2) \\
					   &\quad - v P_0(K_{i}^0(n)_{-0}) + v^{-1} P_0(K_{i}^0(n)_{0+}) - P_0(K_{i}^0(n)_{00}).
\end{align*}
Iterating the above two equations with respect to $i$, we obtain
\begin{align*} 
	P_0(K_{i}^1(n)) &= v^{-4i+2} P_0(K_{0}^0(n)) + \frac{(-v^{-1})(1-v^{-4i+2})}{1-v^{-2}}P_0(U^2) \\
					   &\quad -\sum^{i}_{l=1} v^{-4(i-l)} P_0(K_{l}^1(n)_{00}) -v^{-2} \sum^{i-1}_{l=1} v^{-4(i-1-l)} P_0(K_{l}^0(n)_{00})\\
					   &\quad +\sum^{i}_{l=1} v^{-4(i-l)} (v^{-1} P_0(K_{l}^1(n)_{+0}) -v P_0(K_{l}^1(n)_{0-})), \\
					   &\quad +v^{-2} \sum^{i-1}_{l=1} v^{-4(i-1-l)} (- v P_0(K_{l}^0(n)_{-0}) + v^{-1} P_0(K_{l}^0(n)_{0+})),\\
	P_0(K_{i}^0(n)) &= v^{-4i} P_0(K_{0}^0(n)) + \frac{(-v^{-1})(1-v^{-4i})}{1-v^{-2}}P_0(U^2) \\
					   &\quad - v^{-2}\sum^{i}_{l=1} v^{-4(i-l)} P_0(K_{l}^1(n)_{00}) - \sum^{i}_{l=1} v^{-4(i-l)} P_0(K_{l}^0(n)_{00})\\
					   &\quad +v^{-2} \sum^{i}_{l=1} v^{-4(i-l)} (v^{-1} P_0(K_{l}^1(n)_{+0}) -v P_0(K_{l}^1(n)_{0-})) \\
					   &\quad +\sum^{i}_{l=1} v^{-4(i-l)} (- v P_0(K_{l}^0(n)_{-0}) + v^{-1} P_0(K_{l}^0(n)_{0+})).
\end{align*}

We prove that $P_0(K_{i}^1(1)) \not= P_0(K_{i}^1(0))$ and $P_0(K_{i}^0(1)) \not= P_0(K_{i}^0(0))$.

First, we show that $P_0(K_{i}^1(1)) \not= P_0(K_{i}^1(0))$.
In the above equation, $K_{0}^0(1) \cong K_{0}^0(0) \cong 3_1 \# 3_1^*$, so $P_0(K_{0}^0(1)) = P_0(K_{0}^0(0))$. 
$K_{i}^1(0)_{00}$ and $K_{i}^1(1)_{00}$ can be represented as in the figures below, respectively.
Thus $K_{i}^1(0)_{00}$ and $K_{i}^1(1)_{00}$ are 3-component links whose components are all unknots, with linking numbers $-2i+2$.
	\begin{figure}[H]
		\centering
		\includegraphics[width=0.65\textwidth]{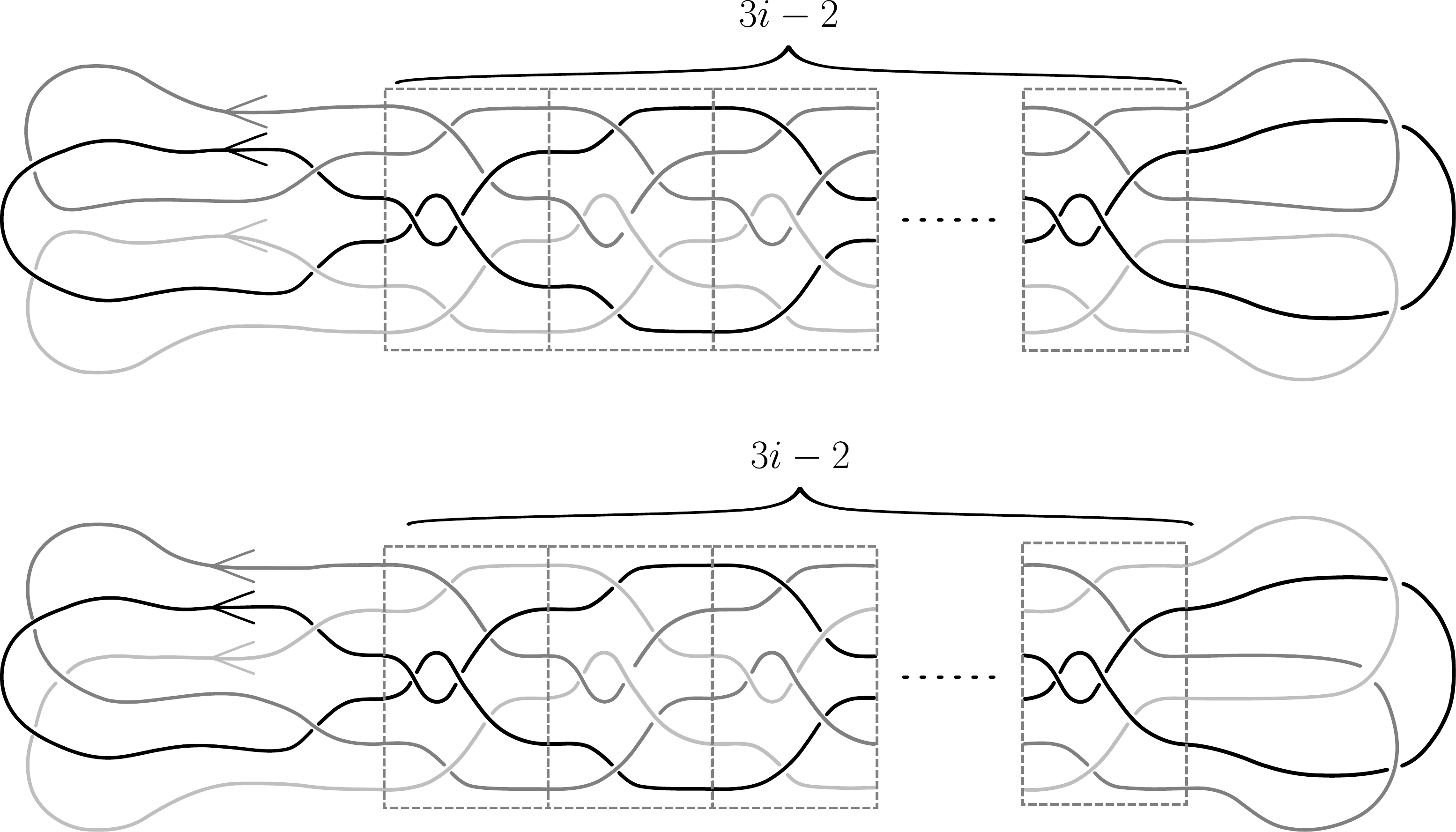}
		\caption{$K_{i}^1(0)_{00}$ and $K_{i}^1(1)_{00}$} \label{example_2607_(n_1 n_2)_00_1}
	\end{figure}

It suffices to show that
\begin{align}
	\begin{split}
			 &\sum^{i}_{l=1} v^{-4(i-l)} (v^{-1} P_0(K_{l}^1(1)_{+0}) -v P_0(K_{l}^1(1)_{0-}) \\ 
			 &+v^{-2} \sum^{i-1}_{l=1} v^{-4(i-1-l)} (- v P_0(K_{l}^0(1)_{-0}) + v^{-1} P_0(K_{l}^0(1)_{0+})) \\%
	\not= &\sum^{i}_{l=1} v^{-4(i-l)} (v^{-1} P_0(K_{l}^1(0)_{+0}) -v P_0(K_{l}^1(0)_{0-}) \\ 
			 &+v^{-2} \sum^{i-1}_{l=1} v^{-4(i-1-l)} (- v P_0(K_{l}^0(0)_{-0}) + v^{-1} P_0(K_{l}
^0(0)_{0+}))
	\end{split} \label{deg of P_0(K_i^1(n))}
\end{align}

Similarly, comparing the expressions for $P_0(K_i^0(1))$ and $P_0(K_i^0(0))$, it suffices to show that
\begin{align} 
	\begin{split}
			 &v^{-2} \sum^{i}_{l=1} v^{-4(i-l)} (v^{-1} P_0(K_{l}^1(1)_{+0}) -v P_0(K_{l}^1(1)_{0-})) \\
			 &+\sum^{i}_{l=1} v^{-4(i-l)} (- v P_0(K_{l}^0(1)_{-0}) + v^{-1} P_0(K_{l}^0(1)_{0+})) \\
	\not= &\; v^{-2} \sum^{i}_{l=1} v^{-4(i-l)} (v^{-1} P_0(K_{l}^1(0)_{+0}) -v P_0(K_{l}^1(0)_{0-})) \\
			 &+\sum^{i}_{l=1} v^{-4(i-l)} (- v P_0(K_{l}^0(0)_{-0}) + v^{-1} P_0(K_{l}^0(0)_{0+})).
	\end{split} \label{deg of P_0(K_i^0(n))}
\end{align}

Then, the following two lemmas hold.

\begin{lemma} \label{pm0}
For any $i \in \mathbb{N}$, we have $P_0(K_i^1(1)_{+ 0}) \not= P_0(K_i^1(0)_{+ 0}))$ and $P_0(K_i^0(1)_{- 0}) \not= P_0(K_i^0(0)_{- 0}))$.
\end{lemma}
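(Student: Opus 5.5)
We will prove the first inequality, $P_0(K_i^1(1)_{+0}) \neq P_0(K_i^1(0)_{+0})$; the second one, for $K_i^0(n)_{-0}$, follows by the same argument with the roles of the two tangles and the crossing signs interchanged. The approach is to identify the links $K_i^1(n)_{+0}$ concretely, reduce them to a simpler family by skein relations in the remaining twist region, and then separate the two values of $P_0$ by a degree or coefficient argument in $v$.

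\textbf{Step 1: identify the links.} Smoothing $c_2$ in $K_i^1(n)=K_i^1(n,-n)$ joins the strands passing through the $(-n)$-tangle. We will redraw $K_i^1(n)_{+0}$ the way $K_i^1(n)_{00}$ was redrawn in Figure~\ref{example_2607_(n_1 n_2)_00_1}. We expect a $2$-component link in which the $n$-tangle, with $c_1$ now positive and so effectively an $(n+2)$- or $(n+1)$-twist region, remains as a twist region between the two components. The $(-n)$-tangle should become a clasp that can be untwisted by Reidemeister I moves, which only contributes a power of $v$. We will also read off the linking number $\lambda_n$ of the two components as an explicit affine function of $n$ and $i$.

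\textbf{Step 2: compute $P_0$.} We will compute $P_0$ of these $2$-component links through the lowest $z$-degree part of the skein relation \eqref{skeinrel}. For a $2$-component link $L$ with components $L_1, L_2$ and linking number $\lambda$, Lickorish--Millett \cite{LM87} give
\[
P_0(v;L)=v^{-2\lambda}(v^{-1}-v)\,P_0(v;L_1)P_0(v;L_2).
\]
So it suffices to determine the two components of $K_i^1(n)_{+0}$ and their $P_0$. From the picture we expect each component to be an unknot, or a torus/twist knot independent of $n$, after the smoothing. In that case the ratio of the two sides is $v^{-2(\lambda_1-\lambda_0)}$. The inequality then reduces to $\lambda_1\neq\lambda_0$, which should hold because changing $n$ from $0$ to $1$ changes the crossings between the two components by an odd amount in the $n$-tangle.

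\textbf{Main obstacle.} The hardest step is Step 1, namely getting the precise component structure and orientations after smoothing $c_2$ for $n=0$ and $n=1$. When $n$ changes parity, the connectivity through the $n$-tangle changes. So it is possible that for one value $K_i^1(n)_{+0}$ is a knot while for the other it is a $2$- or $3$-component link. In that case the argument is even easier: the $z$-degree offset $1-\#L$ differs, and $P_0$ of a knot satisfies $P_0(1)=1$, while $P_0$ of a link with $\#L\ge 2$ vanishes at $v=1$. Both are standard from \cite{LM87}, so evaluating at $v=1$ distinguishes the two. We will first check the component count. If the counts agree, we will use the linking-number formula above and compare the powers of $v$.
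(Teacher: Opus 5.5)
Your framework matches the paper's: identify the smoothed diagrams as $2$-component links and apply Lickorish--Millett's Proposition 22. However, the mechanism you rely on to separate the two values fails.

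\textbf{The gap.} You assume the components are independent of $n$ and reduce the lemma to $\lambda_1\neq\lambda_0$. In the paper the linking numbers \emph{coincide}:
\begin{itemize}
\item $K_i^1(1)_{+0}$ and $K_i^1(0)_{+0}$ both have linking number $-2i+2$;
\item $K_i^0(1)_{-0}$ and $K_i^0(0)_{-0}$ both have linking number $-2i-1$.
\end{itemize}
All four are $2$-component links, so your component-count fallback is also useless. What changes with $n$ is the \emph{knot type} of one component. For $n=1$ the link is an unknot together with a $2i$-twist knot. For $n=0$ both components are unknots; for one of them this needs an explicit isotopy, which the paper draws. Proposition 22 then gives
\[
P_0(K_i^1(1)_{+0}) = v^{-4i+4}(v^{-1}-v)(v^{-2i}-v^{-2i+2}+v^2), \qquad P_0(K_i^1(0)_{+0}) = v^{-4i+4}(v^{-1}-v).
\]
The same formulas hold for $K_i^0(n)_{-0}$ with $v^{-4i-2}$ in place of $v^{-4i+4}$. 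The inequality therefore comes from $P_0$ of the $2i$-twist knot, $v^{-2i}-v^{-2i+2}+v^2$, being different from $1$. It does not come from a power of $v$. The missing steps in your plan are:
\begin{itemize}
\item recognizing the twist-knot component when $n=1$;
\item showing that the corresponding component for $n=0$ is unknotted;
\item computing $P_0$ of the twist knot, for example by the skein relation in its twist region.
\end{itemize}

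\textbf{Smaller points.}
\begin{itemize}
\item Your parity heuristic is unsound. Two distinct components of a diagram cross each other an even number of times in total. Passing from $(0,0)$ to $(1,-1)$ alters \emph{both} tangles and reconnects strands, so an odd change in the $n$-tangle can be compensated elsewhere. Nothing forces $\lambda$ to change, and in fact it does not.
\item With the paper's normalization $v^{-1}P_{L_+}-vP_{L_-}=zP_{L_0}$, the linking-number factor in Proposition 22 is $v^{2\lambda}$, not $v^{-2\lambda}$. For the positive Hopf link one gets $P_0=v^{2}(v^{-1}-v)$. This is why the paper has $v^{-4i+4}$ for $\lambda=-2i+2$.
\item Reidemeister I moves contribute no power of $v$ to $P_L$, which is already an isotopy invariant of oriented links. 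That bookkeeping belongs to the unnormalized $\Lambda_D$, not to the HOMFLYPT polynomial.
\end{itemize}
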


\begin{proof}[Proof of Lemma \ref{pm0}]
First, we consider $P_0(K_i^1(1)_{+ 0})$ and $P_0(K_i^0(1)_{- 0})$.
The links $K_i^1(1)_{+0}$ and $K_i^0(1)_{-0}$ are 2-component links consisting of the unknot and the $2i$-twist knot, as shown in Figures \ref{example_2607_(1 -1)_+0_1} and \ref{example_2607_(1 -1)_-0_0}, respectively.
In both figures, the unknot is shown in black and the $2i$-twist knot in gray.
	\begin{figure}[H]
		\centering
		\includegraphics[width=0.65\textwidth]{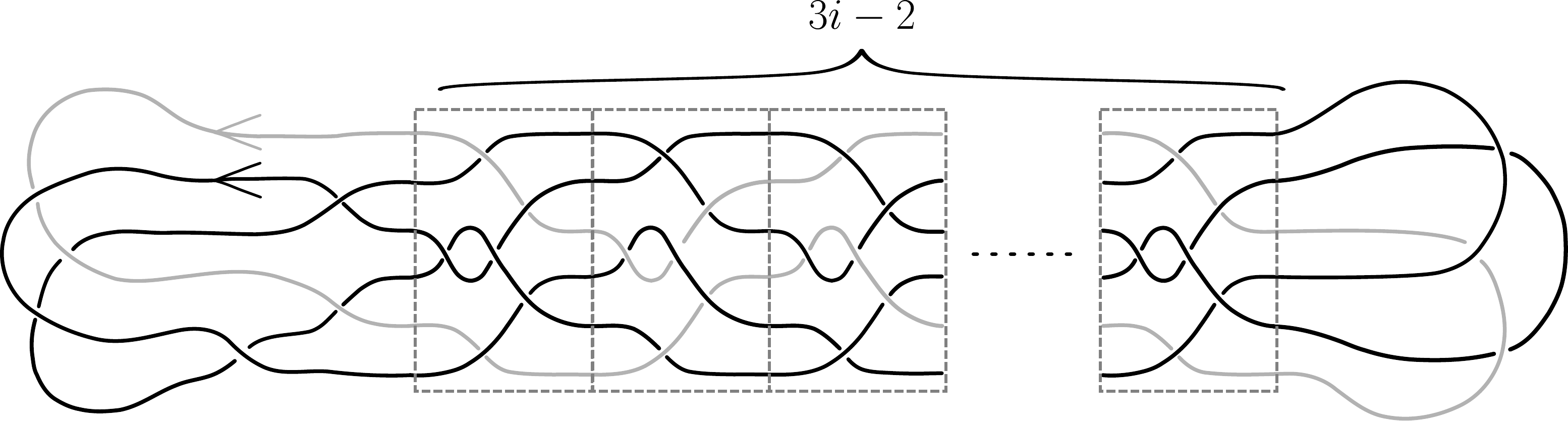}
		\caption{$K_i^1(1)_{+ 0}$} \label{example_2607_(1 -1)_+0_1}
		\vspace{1em}
		\centering
		\includegraphics[width=0.6\textwidth]{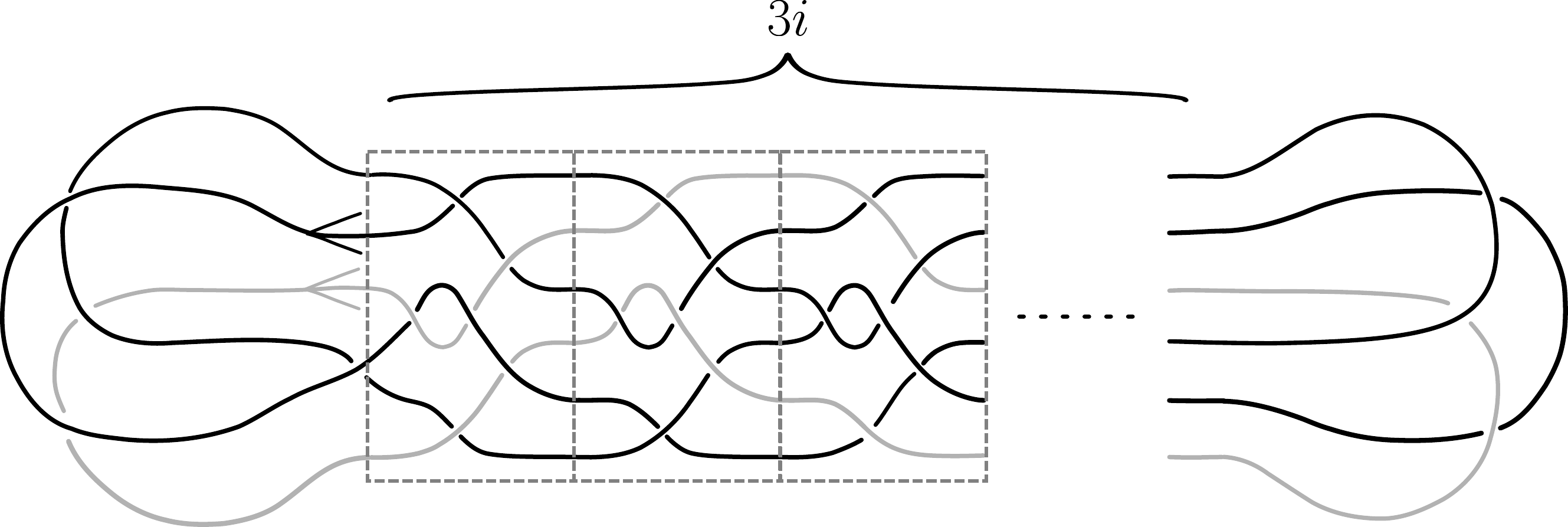}
		\caption{$K_i^0(1)_{- 0}$} \label{example_2607_(1 -1)_-0_0}
	\end{figure}
Moreover, the linking numbers of $K_i^1(1)_{+ 0}$ and $K_i^0(1)_{- 0}$ are $-2i+2$ and $-2i-1$, respectively.
Note that, by direct calculation, the $0$-th coefficient polynomial of the HOMFLY polynomial for $2i$-twist knot is $v^{-2i}-v^{-2i+2}+v^2$.
Thus, by \cite[Proposition 22]{LM87} and a direct calculation, we obtain
\begin{align}
	P_0(K_i^1(1)_{+ 0}) &= v^{-4i+4} (v^{-1}-v) (v^{-2i}-v^{-2i+2}+v^2), \label{K_i^1(1)_{+ 0}} \\
	P_0(K_i^0(1)_{- 0}) &= v^{-4i-2} (v^{-1}-v) (v^{-2i}-v^{-2i+2}+v^2). \label{K_i^0(1)_{- 0}}
\end{align}

Next, we consider $P_0(K_i^1(0)_{+ 0}))$ and $P_0(K_i^0(0)_{- 0}))$.
The links $K_i^1(0)_{+0}$ and $K_i^0(0)_{-0}$ are 2-component links, each consisting of two components isotopic to the unknot, as shown in Figures \ref{example_2607_(0 0)_+0_1} and \ref{example_2607_(0 0)_-0_0}, respectively.
In each case, one component is the unknot, while the other is isotopic to the unknot as shown in Figures \ref{example_2607_(0 0)_+0_1_unkont} and \ref{example_2607_(0 0)_-0_0_unknot}, respectively.

	\begin{figure}[H]
		\centering
		\includegraphics[width=0.65\textwidth]{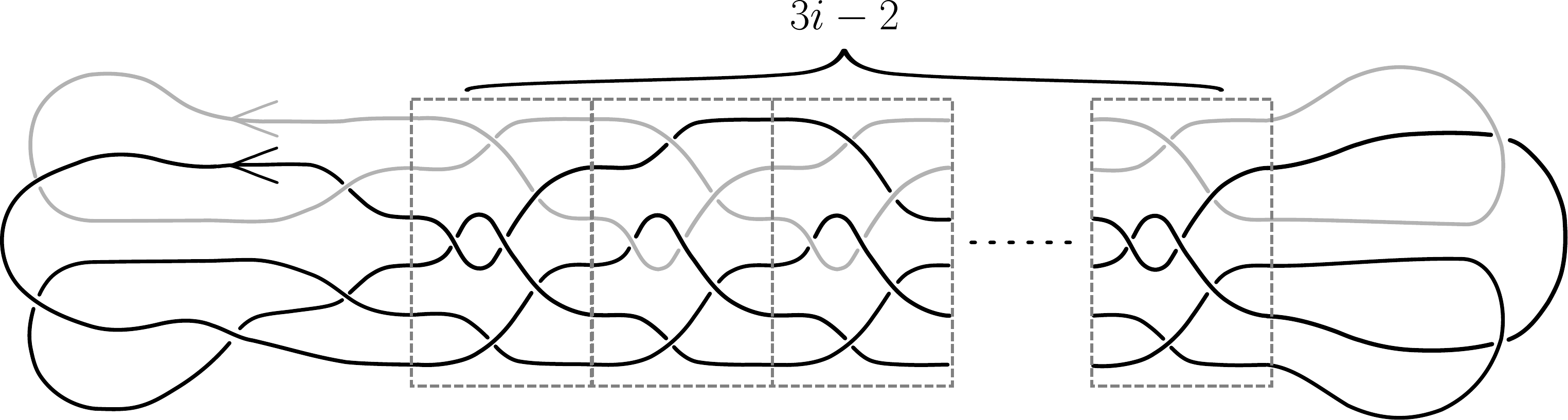}
		\caption{$K_i^1(0)_{+ 0}$} \label{example_2607_(0 0)_+0_1}
	\end{figure}
	\begin{figure}[H]
		\centering
		\includegraphics[width=0.6\textwidth]{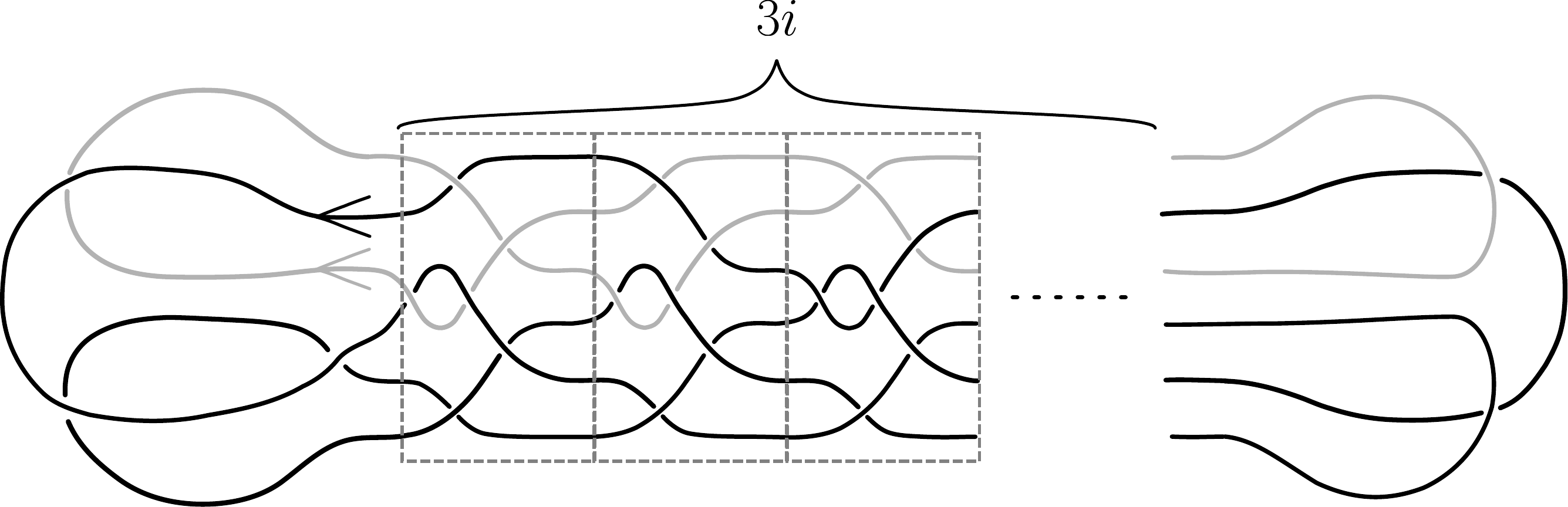}
		\caption{$K_i^0(0)_{- 0}$} \label{example_2607_(0 0)_-0_0}
		\vspace{2em}
		\centering
		\includegraphics[width=1.0\textwidth]{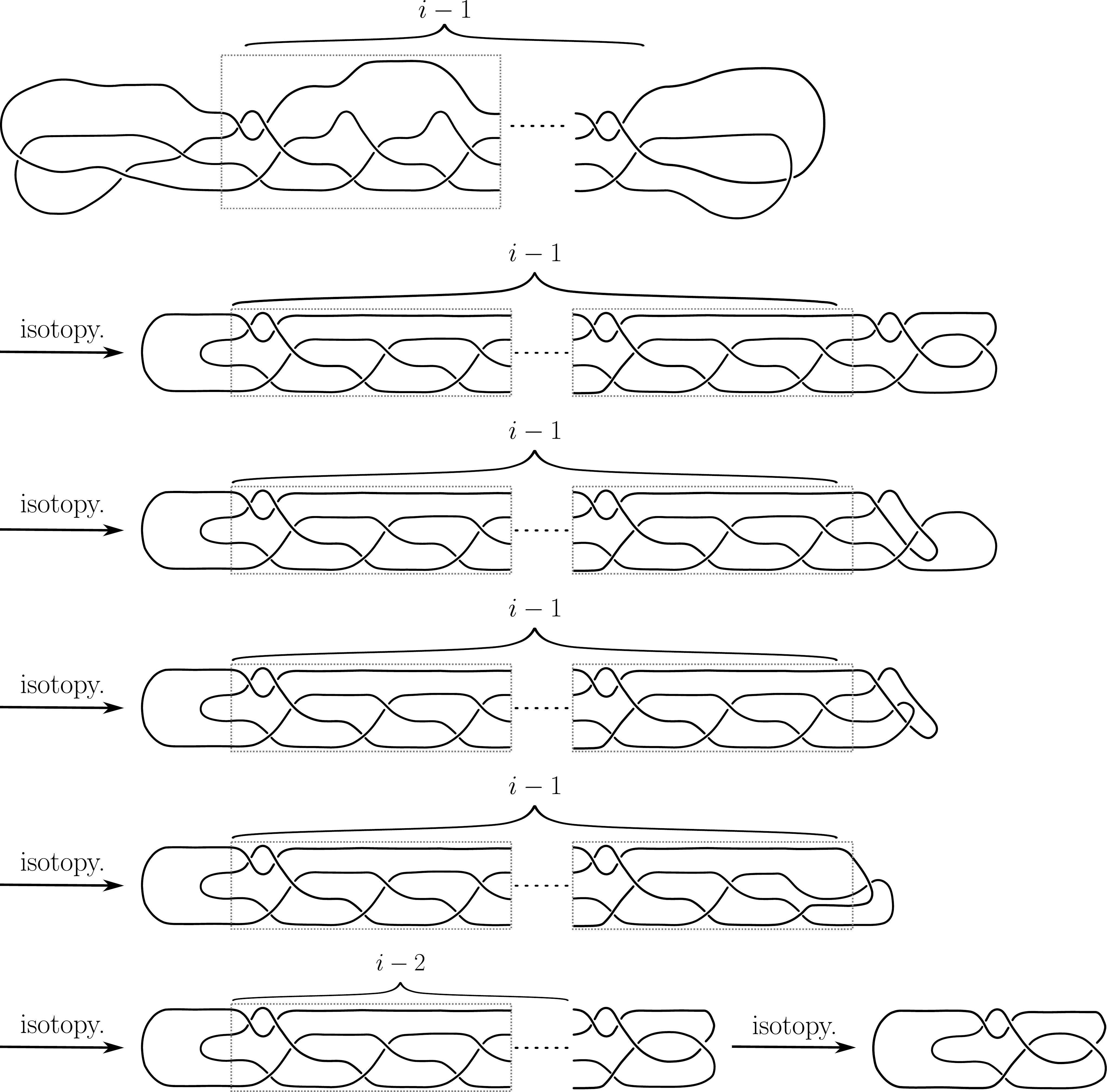}
		\caption{A deformation to make one component unknot.} \label{example_2607_(0 0)_+0_1_unkont}
	\end{figure}
	\begin{figure}[H]
		\centering
		\includegraphics[width=1.1\textwidth]{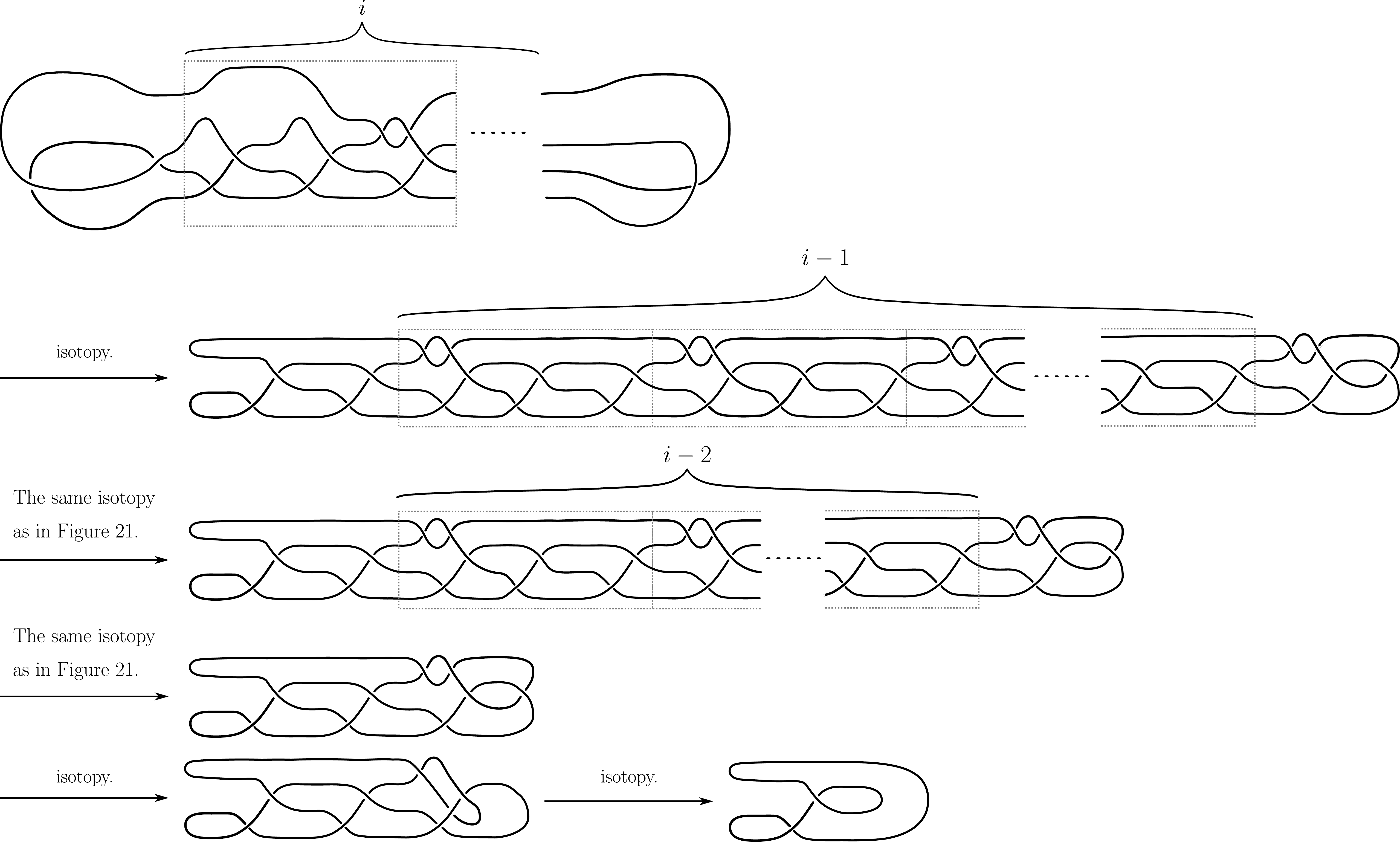}
		\caption{A deformation to make one component unknot.} \label{example_2607_(0 0)_-0_0_unknot}
	\end{figure}
The linking numbers of $K_i^1(0)_{+ 0}$ and $K_i^0(0)_{- 0}$ are $-2i+2$ and $-2i-1$, respectively.
Thus, by a direct computation, we obtain
\begin{align}
	P_0(K_i^1(0)_{+ 0}) &= v^{-4i+4} (v^{-1}-v), \label{K_i^1(0)_{+ 0}} \\
	P_0(K_i^0(0)_{- 0}) &= v^{-4i-2} (v^{-1}-v) \label{K_i^0(0)_{- 0}}
\end{align}
and by Equations \eqref{K_i^1(1)_{+ 0}}--\eqref{K_i^0(0)_{- 0}}, we prove Lemma \ref{pm0}.
\end{proof}

\begin{lemma} \label{mp0}
For any $i \in \mathbb{N}$, we have $P_0(K_i^1(1)_{0 -}) \not= P_0(K_i^1(0)_{0 -})$ and $P_0(K_i^0(1)_{0 +}) \not= P_0(K_i^0(0)_{0 +})$.
\end{lemma}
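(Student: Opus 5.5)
The plan is to follow the proof of Lemma \ref{pm0}. First I identify the four links $K_i^1(1)_{0-}$, $K_i^1(0)_{0-}$, $K_i^0(1)_{0+}$, $K_i^0(0)_{0+}$ explicitly. In each of them one of $c_1,c_2$ is smoothed, so each is a 2-component link. This uses the orientations fixed in Figure \ref{example_2607_(n_1 n_2)}, since $c_1$ and $c_2$ are crossings between strands that the smoothing separates into two components.

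By the symmetry of the diagram, I expect the roles to mirror Lemma \ref{pm0}. Smoothing $c_1$ instead of $c_2$ should exchange which side of the diagram carries the $(1,-1)$ or $(0,0)$ tangles. When $n=1$, one component should again be the unknot and the other a twist knot. When $n=0$, both components should be isotopic to the unknot, via a deformation analogous to Figures \ref{example_2607_(0 0)_+0_1_unkont} and \ref{example_2607_(0 0)_-0_0_unknot}. I would draw these deformations and read off the component types.

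Second, I compute the linking numbers by summing crossing signs between the two components. These should coincide for the $n=1$ and $n=0$ versions, since the $(n,-n)$ tangles contribute $n-n=0$ to the linking count. Call the common value $\lambda$.

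Third, I apply \cite{LM87}*{Proposition 22}. For a 2-component link $L=K_1\cup K_2$ with linking number $\lambda$,
\[
P_0(v;L)=v^{2\lambda}(v^{-1}-v)\,P_0(v;K_1)\,P_0(v;K_2).
\]
This gives
\[
P_0(K_i^1(1)_{0-})=v^{2\lambda}(v^{-1}-v)\,T(v),\qquad P_0(K_i^1(0)_{0-})=v^{2\lambda}(v^{-1}-v),
\]
where $T(v)$ is the $P_0$ of the knotted component. The formulas for $K_i^0(\cdot)_{0+}$ are analogous, with linking number $\lambda'$.

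The inequality then reduces to $T(v)\neq 1$. If the knotted component is the $2i$-twist knot, then
\[
T(v)=v^{-2i}-v^{-2i+2}+v^2\neq 1 \quad\text{for } i\ge 1.
\]
If it is a different twist or 2-bridge knot, I compute its $P_0$ directly from the skein relation \eqref{skeinrel} applied to the twist region. The recursion in the number of twists gives a closed form, and I check that it has more than one monomial.

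The main obstacle is the first step: correctly identifying the knot types of the components after smoothing, especially showing that for $n=0$ the non-obvious component really is unknotted. I will first attempt the same sequence of Reidemeister moves as in Figure \ref{example_2607_(0 0)_+0_1_unkont}, untwisting the $\sigma_1^{-1}\sigma_2$ blocks one at a time. If the component turns out to be a nontrivial knot, I compare $P_0$ values directly instead, for instance by comparing the extreme $v$-degrees, which depend on the twist counts.
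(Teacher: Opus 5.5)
Your template from Lemma \ref{pm0} breaks down at exactly the step you flagged as the main obstacle: for $n=0$ the second component is \emph{not} unknotted. In the paper, $K_i^1(0)_{0-}$ consists of the unknot and a knot $J_i^1$, and $K_i^0(0)_{0+}$ consists of the unknot and a knot $J_i^0$, with
\[
P_0(J_i^1)=-v^{-2i-4}+v^{-2i}+v^{-4},\qquad P_0(J_i^0)=-v^{-2i-2}+v^{-2i+2}+v^{-2}+1-v^2 .
\]
Both are nontrivial, so your reduction to ``$T(v)\neq 1$'' is not available. Also, for $n=1$ the knotted component is the $(2i-1)$-twist knot, whose $P_0$ is $-v^{-2i-2}+v^{-2i}+v^{-2}$. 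It is not the $2i$-twist knot of Lemma \ref{pm0}. Your linking-number prediction is correct: the common values are $-2i+2$ for $j=1$ and $-2i+1$ for $j=0$. After cancelling the common factor $v^{2\lambda}(v^{-1}-v)$, the lemma therefore amounts to showing that $J_i^j$ and the $(2i-1)$-twist knot have different $P_0$.

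Your fallback (``compare $P_0$ values directly \dots\ by comparing the extreme $v$-degrees'') gives no way to compute $P_0(J_i^j)$, and that computation is the real content of the proof. The paper does it by a two-crossing skein induction on $i$. Changing $c_3$ and then $c_4$ turns $J_i^1$ into $J_{i-1}^1$. The two smoothings $(J_i^1)_0$ and $(J_i^1)_{+0}$ are 2-component links of unknots with linking numbers $-i-1$ and $-i+1$. This yields
\[
P_0(J_i^1)=P_0(J_{i-1}^1)+v^{-1}\bigl(P_0((J_i^1)_{+0})-P_0((J_i^1)_{0})\bigr),
\]
with $J_0^1$ the unknot, and $J_i^0$ is handled the same way. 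Once these closed forms are in hand, your degree comparison does finish the argument. For $j=1$ the minimal degrees are $-2i-2$ (twist knot) versus $-2i-4$ ($J_i^1$). For $j=0$ the maximal degrees are $-2$ versus $2$. To complete the proposal, you must identify $J_i^1$ and $J_i^0$ from the diagrams and carry out this induction, or some other explicit computation of their $P_0$.
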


\begin{proof}[Proof of Lemma \ref{mp0}]
Similar arguments apply to Lemma \ref{pm0}.
First, we consider $P_0(K_i^1(1)_{0 -})$ and $P_0(K_i^0(1)_{0 +})$.
$K_i^1(1)_{0 -}$ is a 2-component link consisting of the unknot and the $(2i-1)$- twist knot, as shown in Figure \ref{example_2607_(1 -1)_0-_1}.
	\begin{figure}[H]
		\centering
		\includegraphics[width=0.65\textwidth]{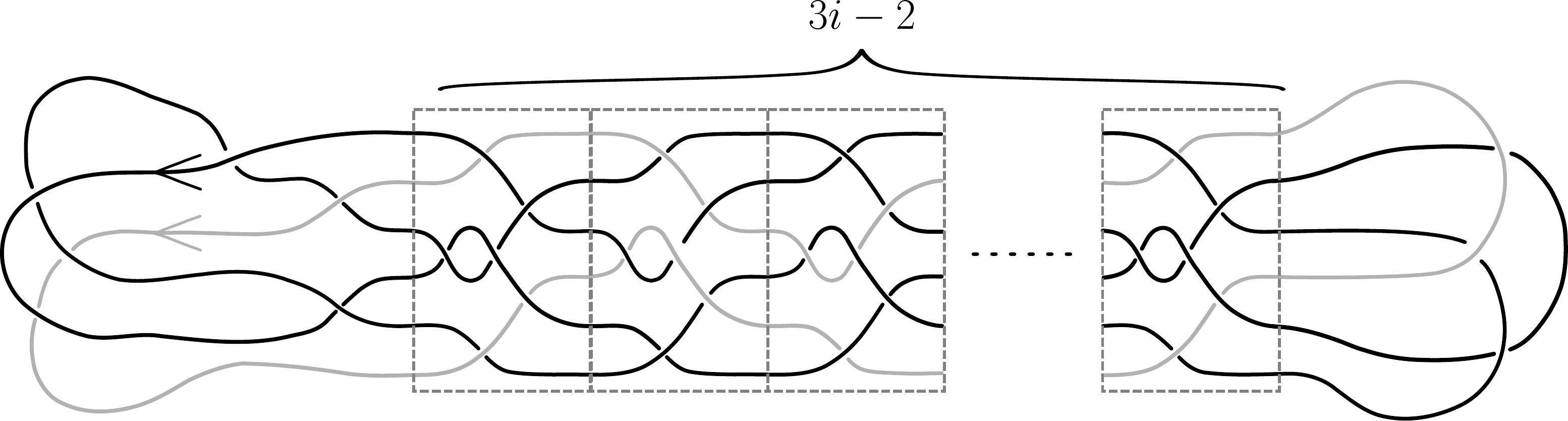}
		\caption{$K_i^1(1)_{0 -}$} \label{example_2607_(1 -1)_0-_1}
	\end{figure}
Similarly, $K_i^0(1)_{0 +}$ is a 2-component link consisting of the unknot and $(2i-1)$- twist knot, as shown in Figures \ref{example_2607_(1 -1)_0+_0}.
	\begin{figure}[H]
		\centering
		\includegraphics[width=0.65\textwidth]{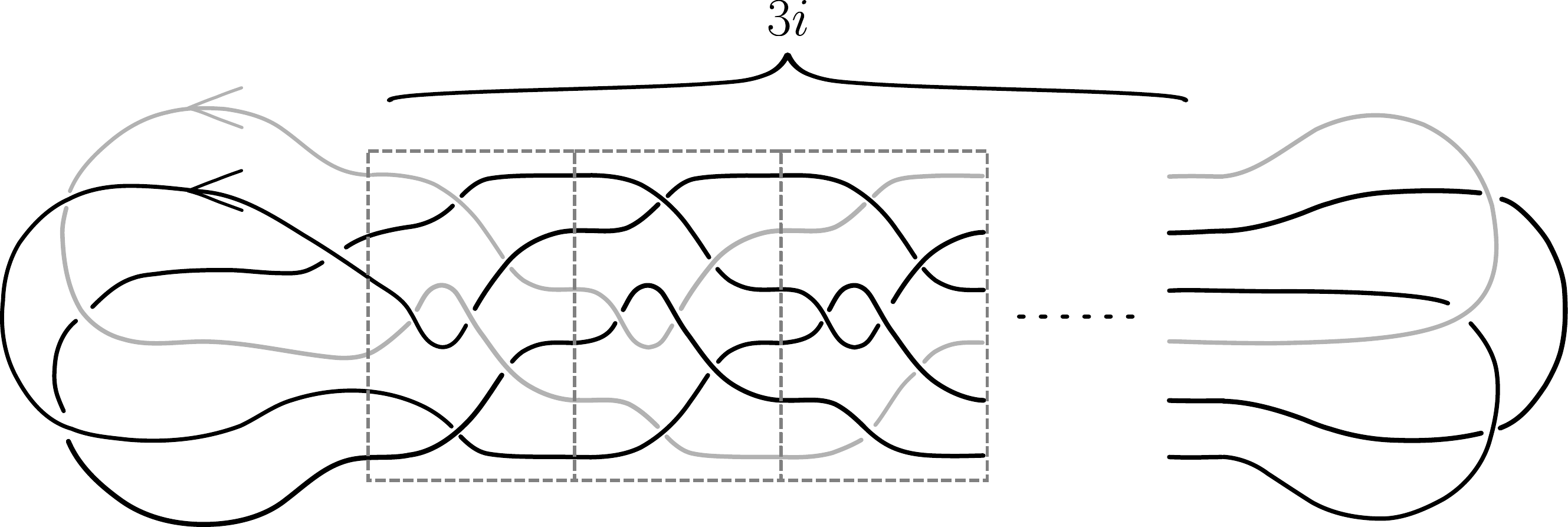}
		\caption{$K_i^0(1)_{0 +}$} \label{example_2607_(1 -1)_0+_0}
	\end{figure}
Moreover, the linking numbers of $K_i^1(1)_{0 -}$ and $K_i^0(1)_{0 +}$ are $-2i+2$ and $-2i+1$, respectively.
Note that, by direct calculation, the $0$-th coefficient polynomial of the HOMFLY polynomial for $(2i-1)$-twist knot is $-v^{-2i-2}+v^{-2i}+v^{-2}$.
Thus, by a direct calculation, we obtain
\begin{align}
	P_0(K_i^1(1)_{0 -}) &= v^{-4i+4} (v^{-1}-v) (-v^{-2i-2}+v^{-2i}+v^{-2}) \label{K_i^1(1)_{0 -}}, \\
	P_0(K_i^0(1)_{0 +}) &= v^{-4i+2} (v^{-1}-v) (-v^{-2i-2}+v^{-2i}+v^{-2}) \label{K_i^0(1)_{0 +}}.
\end{align}

Next, we consider $P_0(K_i^1(0)_{0 -})$ and $P_0(K_i^0(0)_{0 +})$.
As shown in Figure \ref{example_2607_(0 0)_0-_1}, $K_i^1(0)_{0 -}$ consists of the unknot and the knot, say $J_i^1$, depicted in Figure \ref{example_2607_(0 0)_0-_J1}.
	\begin{figure}[H]
		\centering
		\includegraphics[width=0.65\textwidth]{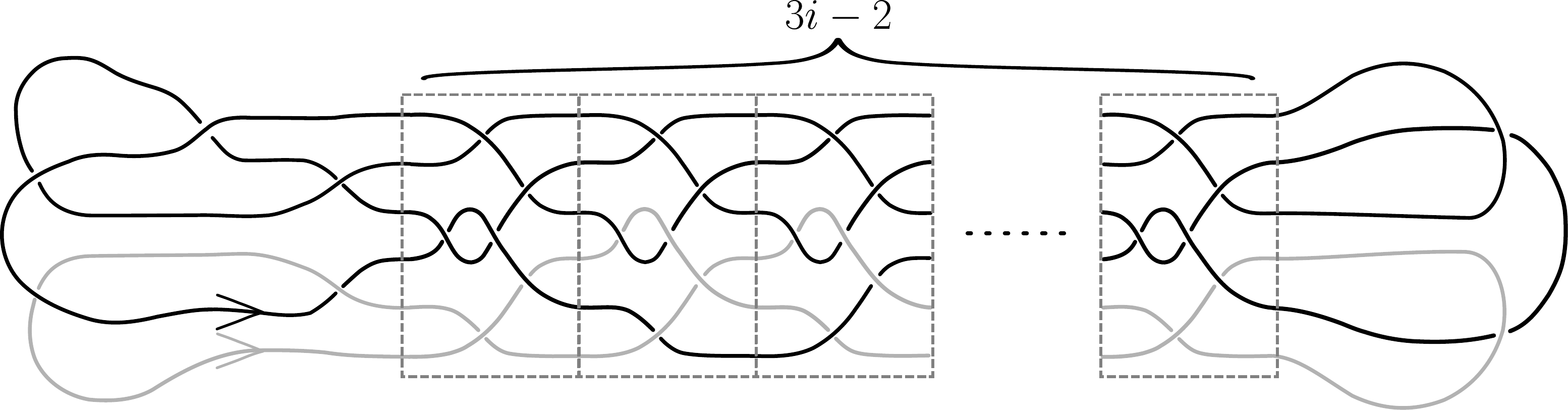}
		\caption{$K_i^1(0)_{0 -}$} \label{example_2607_(0 0)_0-_1}
	\end{figure}
	\begin{figure}[H]
		\centering
		\includegraphics[width=0.65\textwidth]{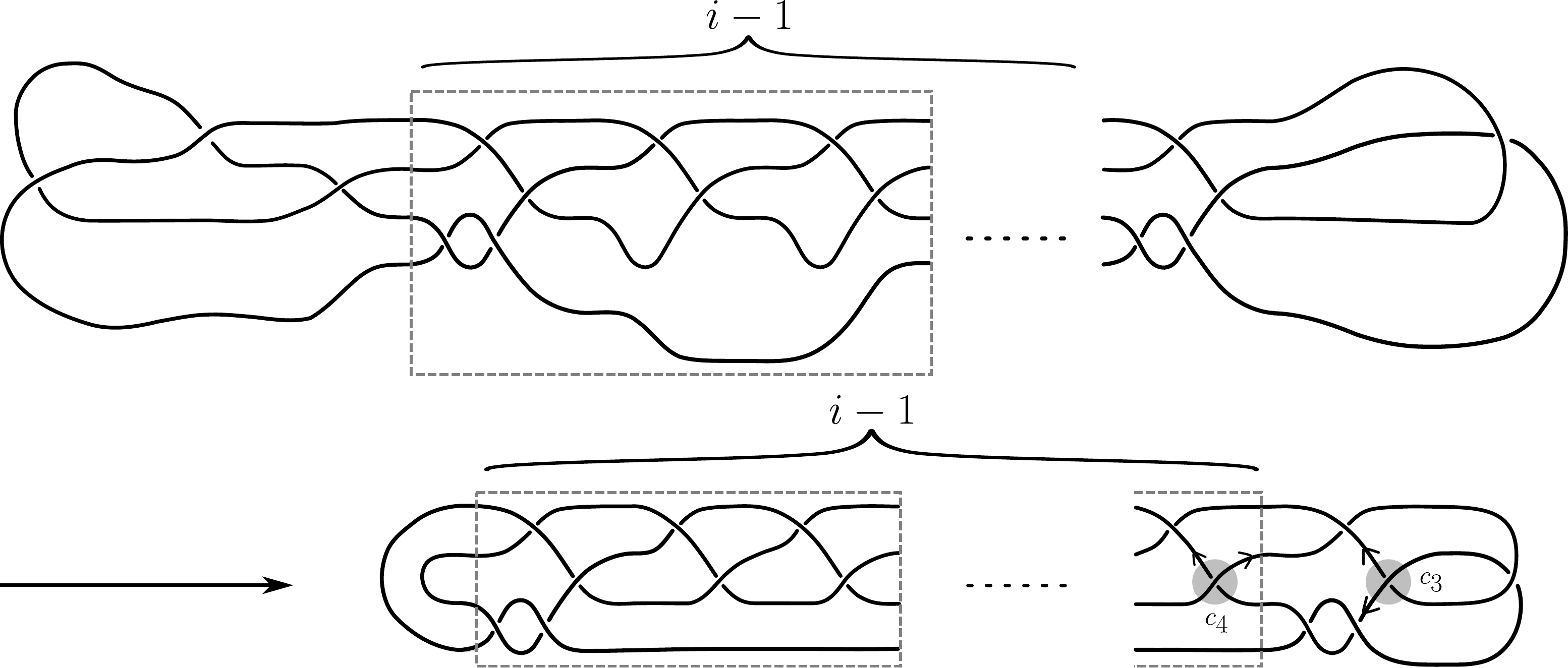}
		\caption{$J_i^1$} \label{example_2607_(0 0)_0-_J1}
	\end{figure}
Then, the following claim holds.
\begin{claim} \label{J_i^1}
$P_0(J_i^1) = -v^{-2i-4}+v^{-2i}+v^{-4}$.
\end{claim}

\begin{proof}[Proof of Claim \ref{J_i^1}]
We consider a crossing change and a smoothing at the two crossings $c_3$ and $c_4$ shown in Figure \ref{example_2607_(0 0)_0-_J1}. 
Let $(J_i^1)_{+}$ and $(J_i^1)_{0}$ be the knots obtained from $J_i^1$ by changing $c_3$ to a positive crossing and by smoothing $c_3$, respectively.
First, $(J_i^1)_{0}$ is a 2-component link whose components are both unknots, with linking number $-i-1$, as shown in Figure \ref{example_2607_(0 0)_0-_J1_0}.
	\begin{figure}[h]
		\centering
		\includegraphics[width=0.65\textwidth]{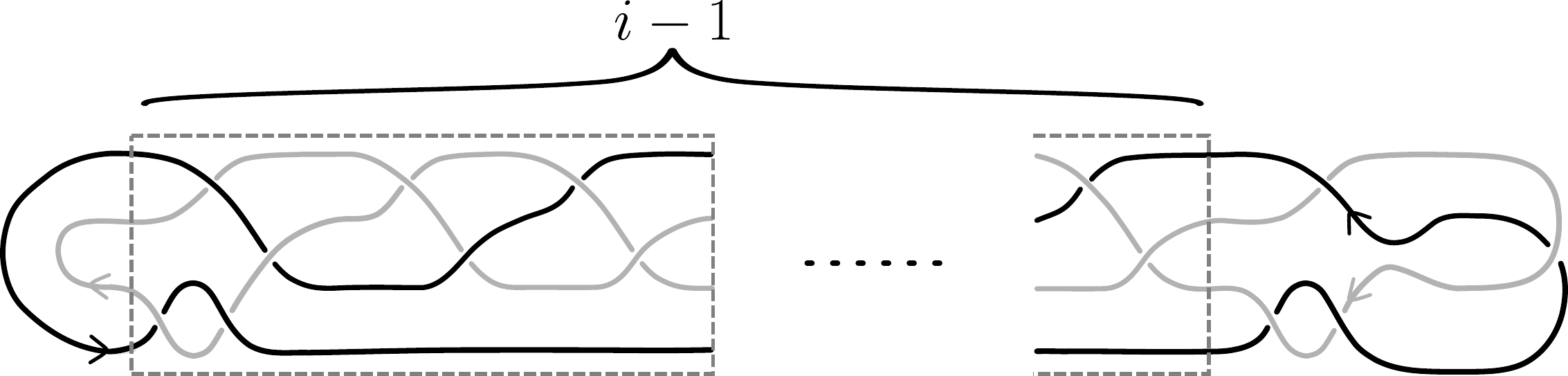}
		\caption{$(J_i^1)_{0}$} \label{example_2607_(0 0)_0-_J1_0}
	\end{figure}
Moreover, we apply a crossing change and a smoothing to $(J_i^1)_{+}$ at $c_4$.
Let $(J_i^1)_{+ -}$ and $(J_i^1)_{+ 0}$ be the knots obtained from $(J_i^1)_{+}$ by changing $c_4$ to a negative crossing and by smoothing  $c_4$, respectively.
Then, $(J_i^1)_{+ -}$ is isotopic to $J_{i-1}^1$.
$(J_i^1)_{+0}$ is a 2-component link whose components are both unknots, with linking number $-i+1$ as shown in Figure \ref{example_2607_(0 0)_0-_J1_00_2}.
	\begin{figure}[H]
		\centering
		\includegraphics[width=0.65\textwidth]{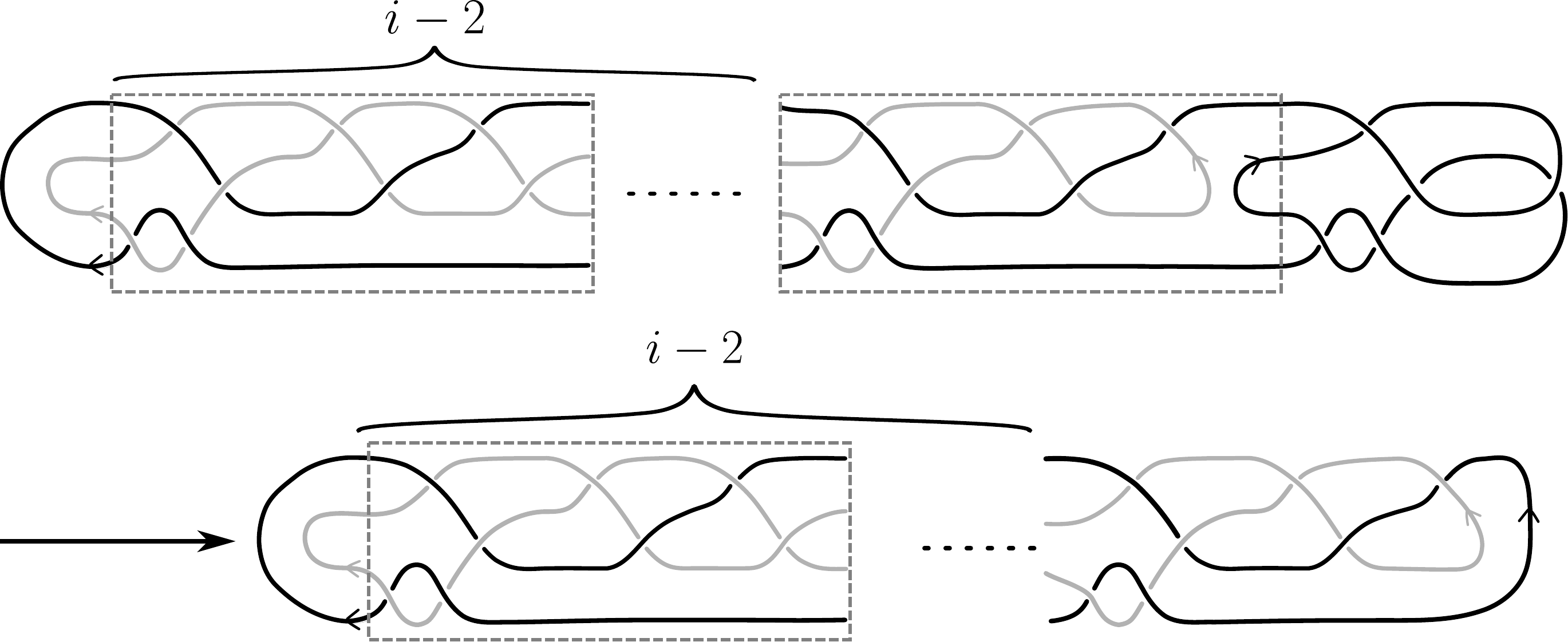}
		\caption{$(J_i^1)_{+0}$} \label{example_2607_(0 0)_0-_J1_00_2}
	\end{figure}
From this, we obtain
\begin{align*}
	P_0(J_i^1) = P_0(J_{i-1}^1) + v^{-1}(-P_0((J_i^1)_{0})+P_0((J_i^1)_{+0})).
\end{align*}

Note that $J_0^1$ is the unknot.
By calculating inductively with this, we can complete Claim \ref{J_i^1}.
\end{proof}

As shown in Figure \ref{example_2607_(0 0)_0+_0}, $K_i^0(0)_{0 +}$ consists of the unknot and the knot, say $J_i^0$, depicted in Figure \ref{example_2607_(0 0)_0+_0_J0}.
	\begin{figure}[H]
		\centering
		\includegraphics[width=0.65\textwidth]{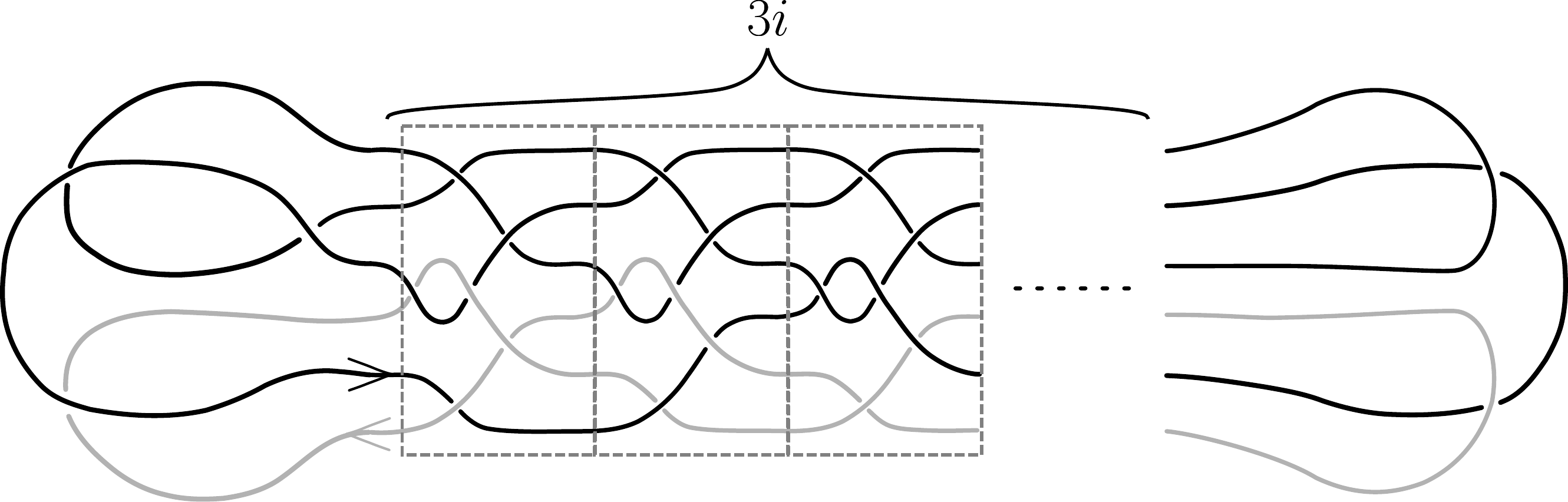}
		\caption{$K_i^0(0)_{0 +}$} \label{example_2607_(0 0)_0+_0}
		\vspace{1em}
		\centering
		\includegraphics[width=0.7\textwidth]{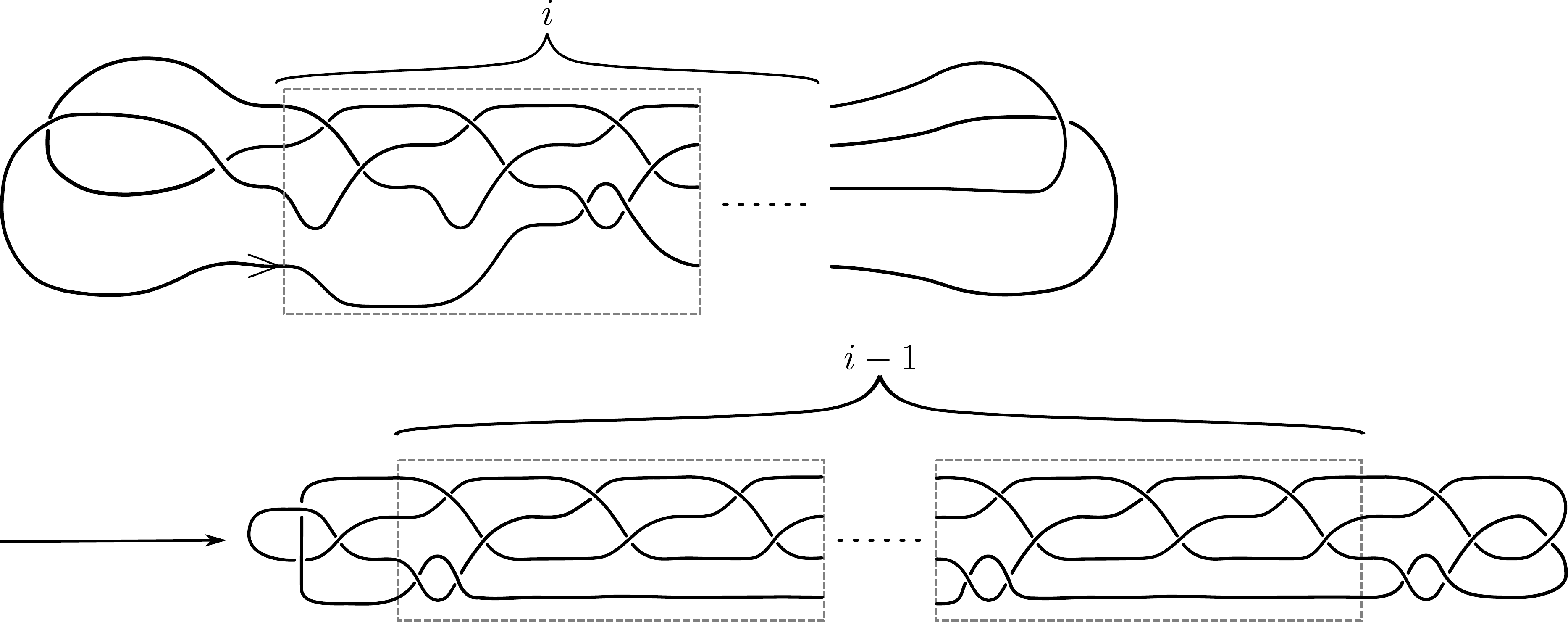}
		\caption{$J_i^0$} \label{example_2607_(0 0)_0+_0_J0}
	\end{figure}
An inductive calculation similar to Claim \ref{J_i^1} gives
\begin{align*}
	P_0(J_i^0) = -v^{-2i-2}+v^{-2i+2}+v^{-2}+1-v^2.
\end{align*}

Moreover, the linking numbers of $K_i^1(0)_{0 -}$ and $K_i^0(0)_{0 +}$ are $-2i+2$ and $-2i+1$, respectively.
Thus, by a direct calculation, we obtain
\begin{align}
	P_0(K_i^1(0)_{0 -}) &= v^{-4i+4} (v^{-1}-v) (-v^{-2i-4}+v^{-2i}+v^{-4}) \label{K_i^1(0)_{0 -}}, \\
	P_0(K_i^0(0)_{0 +}) &= v^{-4i+2} (v^{-1}-v) (-v^{-2i-2}+v^{-2i+2}+v^{-2}+1-v^2) \label{K_i^0(0)_{0 +}}
\end{align}
and by Equations \eqref{K_i^1(1)_{0 -}}--\eqref{K_i^0(0)_{0 +}}, we prove Lemma \ref{mp0}.
\end{proof}

By Lemma \ref{pm0} and \ref{mp0}, we show
\begin{align}
	v^{-1} P_0(K_{l}^1(1)_{+0}) -v P_0(K_{l}^1(1)_{0-}) &= v^{-4l+4} (v^{-1}-v)^2 (2v^{-2l}-1), \label{P_0(K_i^1(1))} \\
	v^{-1} P_0(K_{l}^1(0)_{+0}) -v P_0(K_{l}^1(0)_{0-}) &= v^{-4l+4} (v^{-1}-v)^2 (v^{-2l-2}-v^{-2l}-v^{-2}), \label{P_0(K_i^1(0))} \\
	- v P_0(K_{l}^0(1)_{-0}) + v^{-1} P_0(K_{l}^0(1)_{0+}) &= -v^{-4l} (v^{-1}-v)^2 (2v^{-2l}-1), \label{P_0(K_i^0(1))} \\
	- v P_0(K_{l}^0(0)_{-0}) + v^{-1} P_0(K_{l}^0(0)_{0+}) &= -v^{-4l-2} (v^{-1}-v)^2 (v^{-2l+2}+v^{-2l+4}-v^4) \label{P_0(K_i^0(0))}.
\end{align}
Using \eqref{P_0(K_i^1(1))}-- \eqref{P_0(K_i^0(0))}, direct calculation shows
\begin{align}
	\begin{split}
			 &\sum^{i}_{l=1} v^{-4(i-l)} (v^{-1} P_0(K_{l}^1(1)_{+0}) -v P_0(K_{l}^1(1)_{0-}) \\ 
			 &+v^{-2} \sum^{i-1}_{l=1} v^{-4(i-1-l)} (- v P_0(K_{l}^0(1)_{-0}) + v^{-1} P_0(K_{l}^0(1)_{0+})) \\%
			 &= v^{-4i+2} (v^{-1}-v)^2 (i+1-iv^2)
	\end{split} \label{LHSof(4.2)}
\end{align}
and
\begin{align}
	\begin{split}
			 &\sum^{i}_{l=1} v^{-4(i-l)} (v^{-1} P_0(K_{l}^1(0)_{+0}) -v P_0(K_{l}^1(0)_{0-}) \\ 
			 &+v^{-2} \sum^{i-1}_{l=1} v^{-4(i-1-l)} (- v P_0(K_{l}^0(0)_{-0}) + v^{-1} P_0(K_{l}^0(0)_{0+})) \\
			 &= v^{-4i+4} (v^{-1}-v)^2 (i-1+v^{-2i-2}+v^{-2i}-iv^{-2}).
	\end{split} \label{RHSof(4.2)}
\end{align}
The minimal degree of \eqref{LHSof(4.2)} and \eqref{RHSof(4.2)} are equal to $-4i$ and  $-6i$, respectively.
Similarly, 
\begin{align} 
	\begin{split}
			 &v^{-2} \sum^{i}_{l=1} v^{-4(i-l)} (v^{-1} P_0(K_{l}^1(1)_{+0}) -v P_0(K_{l}^1(1)_{0-})) \\
			 &+\sum^{i}_{l=1} v^{-4(i-l)} (- v P_0(K_{l}^0(1)_{-0}) + v^{-1} P_0(K_{l}^0(1)_{0+})) \\
			 &= -v^{-4i+2} (v^{-1}-v)^2 (i+2v^{-2i-2}-(i+2)v^{-2}) \\
	\end{split} \label{LHSof(4.3)}
\end{align}
and
\begin{align}
	\begin{split}
			&v^{-2} \sum^{i}_{l=1} v^{-4(i-l)} (v^{-1} P_0(K_{l}^1(0)_{+0}) -v P_0(K_{l}^1(0)_{0-})) \\
			 &+\sum^{i}_{l=1} v^{-4(i-l)} (- v P_0(K_{l}^0(0)_{-0}) + v^{-1} P_0(K_{l}^0(0)_{0+})) \\
			 &= v^{-4i+2} (v^{-1}-v)^2 i(1-v^{-2}).
	\end{split} \label{RHSof(4.3)}
\end{align}
The minimal degree of \eqref{LHSof(4.3)} and \eqref{RHSof(4.3)} are equal to $-6i-2$ and  $-4i-2$, respectively.
Thus we obtain \eqref{deg of P_0(K_i^1(n))} and \eqref{deg of P_0(K_i^0(n))}.
Hence, the proof of Proposition \ref{K_{i}^j(n,-n)} is complete.

\section*{Acknowledgement}
I would like to express my appreciation to Professor Yoshiyuki Yokota for his constructive suggestions and thoughtful guidance.
This work was supported by JST SPRING, Grant Number JPMJSP2156.



\bigskip
Yuki Matsushima

\address{
  Department of Mathematical Sciences, Tokyo Metropolitan
University, Minami-Osawa Hachioji, Tokyo, 192-0397, Japan.
}

\textit{E-mail address}: \href{mailto:matsushima-yuki@ed.tmu.ac.jp}{\texttt{matsushima-yuki@ed.tmu.ac.jp}}

\end{document}